\newtheoremstyle{nonum}{}{}{\itshape}{}{\bfseries}{.}{ }{\thmnote{#3}}
\newtheorem{thm}{Theorem}[section]
\newtheorem*{thm*}{Theorem}
\newtheorem{lem}[thm]{Lemma}
\newtheorem{prop}[thm]{Proposition}
\newtheorem{rem}[thm]{Remark}
\newtheorem{definition}[thm]{Definition}
\newtheorem*{definition*}{Definition}
\newtheorem*{rems*}{Remarks}
\theoremstyle{nonum}
\newcommand{\R}{\mathbb R}
\newcommand{\Z}{\mathbb Z}
\newcommand{\N}{\mathbb N}
\def\eps{{\varepsilon}}
\def\bdelta{ {\bar{\delta}} }
\newcommand{\tOmega}{\widetilde{\Omega}}
\renewcommand{\S}{\mathbb{S}}
\renewcommand{\H}{\mathbb{H}}
\DeclareFontFamily{OMX}{yhex}{}
\DeclareFontShape{OMX}{yhex}{m}{n}{<->yhcmex10}{}
\DeclareSymbolFont{yhlargesymbols}{OMX}{yhex}{m}{n}
\DeclareMathAccent{\wideparen}{\mathord}{yhlargesymbols}{"F3}
\begin{document}
	\title {Caustic-Free Regions for Billiards on\\ Surfaces of Constant Curvature}
	\date{}
	\author{D.I. Florentin, Y. Ostrover, D. Rosen}
	\maketitle
	\begin{abstract}
	In this note we study caustic-free regions for convex billiard tables in the hyperbolic plane or the hemisphere.
	In particular, following a result by Gutkin and Katok in the Euclidean case, we estimate the size of such regions in terms of the geometry of the billiard table. Moreover, we extend to this setting a theorem due to Hubacher which shows that no caustics exist near the boundary of a convex billiard table whose curvature is discontinuous.
	
	\end{abstract}

\section{Introduction and results}\label{Sec_Intro}

Starting with the pioneering work of Birkhoff~\cite{Birk}, billiard dynamics, which describes the motion of a massless particle
in a bounded domain with a perfectly reflecting boundary, has been extensively studied from various points of view (see e.g.,~\cite{KatHas, KozTre,Tab}).
An important role in understanding the dynamics of convex planar billiard tables is played by the existence, persistence,  and geometric and dynamical properties of caustics. Recall that a caustic is a curve inside the billiard table with the property that every billiard trajectory once tangent to it, remains tangent after every reflection at the boundary. It is known that there is a natural correspondence between caustics and invariant circles of the billiard map (see e.g., Chapter 2 of~\cite{Tab}). 
A classical result of Lazutkin~\cite{Laz} states that for a planar billiard table which is strictly convex and smooth enough 
 there exists an infinite collection of caustics close to the boundary of the table.
By contrast, Mather~\cite{Ma} showed that if the curvature of the boundary of the billiard table vanishes at one point, then the dynamics possesses no caustics at all. Moreover, Hubacher~\cite{Hu} showed that a discontinuity of the curvature excludes caustics from a neighborhood the boundary of the table.  In~\cite{GutKat}, Gutkin and Katok obtained a quantitative version of Mather's result, and 
provided estimates on the size of caustic-free regions for planar Euclidean billiards in terms of
the geometry of the billiard table (cf. Bialy~\cite{Bia,Bia1}).

In this note we study caustic-free 
regions for convex billiards in the hyperbolic plane
${\mathbb H}^2$ and in the hemisphere $\S^2_+$ (see Section~\ref{Sec_Prelim} below for the relevant definitions). Unless specifically stated otherwise, in what
follows we consider only convex caustics, as the methods
we use in the proofs require this assumption. Motivated by the work~\cite{GutKat} of Gutkin and Katok, our first result reads:


\begin{thm}\label{Thm_Main}
Let $K$ be a convex billiard table with $C^2$-smooth boundary in either $\H^2$ or $\S^2_+$. Assume that the boundary $\partial K$ has minimal curvature $\kappa_{\min}$, and that the table $K$ has diameter $D$. In the case of $\S^2_+$, assume further that $D < \frac{\pi}{2}$.
Let
$$ \varepsilon = \begin{cases} 
    \, {\rm arctanh} \left(\sqrt{2}\kappa_{\min} \tanh^{\frac{3}{2}}(D) \sinh^{\frac{1}{2}}(D)\right), & \text{in $\H^2$},\\
    2 \, {\rm arctan} \left(\frac{\sqrt{\pi}}{2}\kappa_{\min} \tan^{2}(D) \right)                      , & \text{in $\S^2_+$}.
	\end{cases}$$ 
Then, every convex caustic in $K$ lies in the $\varepsilon$-neighborhood of the boundary $\partial K$. 	
\end{thm}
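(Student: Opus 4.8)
The plan is to adapt the Gutkin--Katok argument to constant curvature, where the key invariant is the phase-space area (Lazutkin-type) or, more directly, the \emph{mirror equation} and the \emph{string construction}. The heart of the matter is: if $\Gamma$ is a convex caustic for the table $K$ and a chord of $K$ is tangent to $\Gamma$, then the chord length together with the two angles it makes with $\partial K$ are constrained, and iterating the billiard map forces a lower bound on how far $\Gamma$ must sit from $\partial K$. Concretely, I would first recall from the preliminary section the relation, valid on a surface of constant curvature, between a billiard trajectory segment tangent to a convex caustic and the supporting geometry: the segment of length $\ell$ between two consecutive reflection points subtends arcs on the caustic, and the reflection angle $\varphi$ (between chord and boundary) satisfies a comparison inequality with the curvature $\kappa_{\min}$ of $\partial K$. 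On a space form this is cleanest via the Jacobi fields / trigonometric identities: in $\H^2$ one uses $\sinh$, $\tanh$ of the relevant lengths, and in $\S^2_+$ one uses $\sin$, $\tan$; the hypothesis $D<\pi/2$ guarantees all these quantities stay in the range where the comparison is monotone and the table is still ``small'' enough for convex caustics to behave.

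Second, I would make the comparison quantitative. Suppose, for contradiction, that there is a convex caustic $\Gamma$ meeting the open $\varepsilon$-neighborhood's complement, i.e.\ some point of $\Gamma$ is at distance $\ge\varepsilon$ from $\partial K$. Take a trajectory tangent to $\Gamma$ and look at a reflection point $p\in\partial K$; the incoming and outgoing chords from $p$ are tangent to $\Gamma$, hence must ``go around'' the part of $\Gamma$ that is far from $\partial K$. Using the curvature bound $\kappa\ge\kappa_{\min}$ and the diameter bound (all chords have length $\le D$), I would bound below the angle $\varphi$ that these chords make with $\partial K$ at $p$ — equivalently, bound below how much the trajectory must ``turn.'' The string/area functional gives the precise trade-off: the perimeter of $\Gamma$ and the ``string length'' are controlled by $D$, while the curvature lower bound controls the turning per bounce; combining them yields an inequality of the shape
\[
\tanh\varphi \;\le\; \sqrt{2}\,\kappa_{\min}\,\tanh^{3/2}(D)\,\sinh^{1/2}(D)
\]
in $\H^2$ (and the stated $\tan$-analogue in $\S^2_+$), whence $\varphi\le\varepsilon$ for the displayed choice of $\varepsilon$. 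But a standard fact — also to be quoted from the preliminaries — is that the distance from a convex caustic to the boundary is comparable to (in fact bounded by a monotone function of) the minimal reflection angle along trajectories tangent to it; so $\varphi\le\varepsilon$ forces $\Gamma$ into the $\varepsilon$-neighborhood, contradicting the assumption.

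The order of steps would therefore be: (i) set up the space-form trigonometry for a single tangent chord and a single reflection, isolating the inequality relating $\ell$, $\varphi$, and $\kappa_{\min}$; (ii) use convexity of $\Gamma$ and $\ell\le D$ to pass from this local inequality to the global bound on $\varphi$; (iii) invert the resulting expression to read off $\varepsilon$, carefully checking that the arguments of $\mathrm{arctanh}$ (resp.\ $\mathrm{arctan}$) are in range — in $\H^2$ this is where one may need to also observe that if the bracketed quantity is $\ge 1$ the statement is vacuous since then $\varepsilon=+\infty$ and every caustic trivially lies in the neighborhood; (iv) conclude that every point of every convex caustic is within $\varepsilon$ of $\partial K$. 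I expect step (i) — getting the \emph{sharp} constants $\sqrt 2$, the exponents $3/2$ and $1/2$, and the factor $\sqrt\pi/2$ on the sphere — to be the main obstacle, since it requires the right comparison estimate (likely a Rauch-type comparison of Jacobi fields, or an explicit computation with geodesic polar coordinates) rather than just a crude bound; the passage from local to global (step (ii)) and the inversion (step (iii)) should be routine once the space-form billiard identities from Section~\ref{Sec_Prelim} are in hand.
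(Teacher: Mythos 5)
Your proposal correctly identifies the right tools (the mirror equation, the string construction, space-form trigonometry) and correctly predicts where the difficulty lies, but it stops short of the actual mechanism and leaves a genuine gap at the decisive step. The paper's proof is a two-sided squeeze on the \emph{Lazutkin parameter} $L$ of the caustic $\gamma$: since $L=\mathrm{Per}(\mathrm{Conv}(m,\gamma))-\mathrm{Per}(\gamma)$ is the \emph{same} for every $m\in\partial K$, one may bound it from above at a boundary point where the curvature is close to $\kappa_{\min}$ (via the mirror equation, the hyperbolic law of cosines and log-concavity of $\sinh$, giving $L<2\kappa_{\min}^2\tanh^2(D)\sinh(D)$, Lemma~\ref{Prop_Up-Bd}), and from below at the boundary point farthest from $\gamma$ (via perimeter monotonicity from the Crofton formula and an optimization over the tangent angle, giving $\tanh^2(\delta_\gamma)/\tanh(D)<L$, Lemma~\ref{Prop_Low-Bd}). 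It is the constancy of $L$ along $\partial K$ that transports the purely local information ``the curvature is small at one point'' into the global conclusion ``every point of $\gamma$ is within $\varepsilon$ of $\partial K$.'' Your outline contains no such global transport mechanism.

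Concretely, the gap is your step (iv): you invoke as a ``standard fact'' that the distance from a convex caustic to the boundary is bounded by a monotone function of the minimal reflection angle. No such fact appears in the preliminaries, and it is exactly the nontrivial half of the argument (Lemma~\ref{Prop_Low-Bd}): a small reflection angle at one boundary point only tells you that $\gamma$ comes close to $\partial K$ \emph{near that point}, not that $\max_{x\in\gamma}d(x,\partial K)$ is small; bridging local to global is what $L$ is for. A secondary symptom of the same confusion is your displayed inequality $\tanh\varphi\le\sqrt2\,\kappa_{\min}\tanh^{3/2}(D)\sinh^{1/2}(D)$ followed by ``$\varphi\le\varepsilon$'': here $\varphi$ is an angle while $\varepsilon$ is a hyperbolic distance, so the two cannot be compared without the missing angle-to-distance lemma, whose precise quantitative form (including the exponents $3/2$ and $1/2$ and the constant $\sqrt2$) is precisely what has to be proved. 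Your observation in (iii) that the statement is vacuous when the bracketed quantity is $\ge 1$ is correct and harmless, but it does not repair the main gap.
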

Here, the $ \varepsilon$-neighborhood of $\partial K$ is the set $(\partial K)_{\varepsilon}$ of points whose distance to $\partial K$ is at most $\varepsilon$. 
Theorem~\ref{Thm_Main} states that $K\setminus (\partial K)_{\varepsilon}$ is a caustic-free region, i.e., a subset of the billiard table which no convex caustic may intersect. Note that when $\varepsilon$ exceeds the inradius of $K$, the neighborhood $K\setminus (\partial K)_{\varepsilon}$ is empty, and Theorem~\ref{Thm_Main} provides no caustic-free region. We remark that the restriction $D < \frac{\pi}{2}$ in the spherical case is technical, and a similar result may hold without it.

We note that when the billiard table has a flat point, i.e., where $\kappa_{\min}=0$, Theorem~\ref{Thm_Main} recovers the known result that there are no convex caustics in the interior of the table $K$ (cf. Mather's result~\cite{Ma}, and Section \textsection 6 of~\cite{GutTab} for the case of Minkowski billiards).

Note that, in contrast to the Euclidean case, in $\mathbb{H}^2$ the caustic-free region $K \setminus (\partial K)_\varepsilon$ may be disconnected, as shown in Figure \ref{fig:stadium}. For more detail, see Remark~\ref{rmk:hyperbolic-stadium} below. 

\begin{figure}[h]
    \centering
    \includegraphics[scale=0.8]{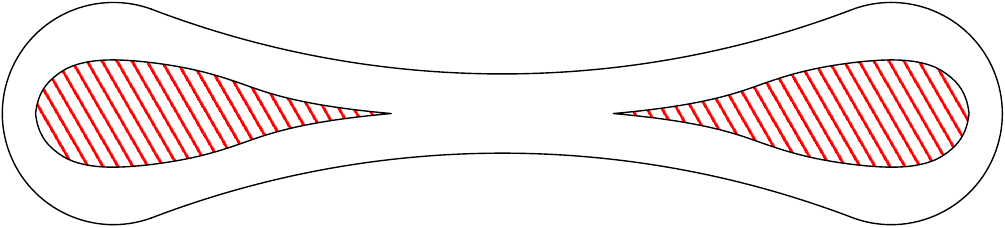}
    \caption{A disconnected caustic-free region for a billiard table in the hyperbolic plane}
    \label{fig:stadium}
\end{figure}

Our next result is an analogue  of Hubacher's Theorem~\cite{Hu} regarding the absence of caustics near the boundary of a billiard table.
More precisely,  
\begin{thm}\label{Thm_Hubbacher_Hyperbolic}
		Let $K$ be a convex billiard table in either ${\mathbb H}^2$ or $\S^2_+$ with $C^1$-smooth and piecewise $C^2$-smooth boundary. 
	Assume further that the curvature of $\partial K$ is continuous except for finitely many jump discontinuities, bounded away from zero, 
and has at least one discontinuity point.
		%
	Then the boundary $\partial K$ has a neighborhood which
	no caustic can intersect.  
	\end{thm}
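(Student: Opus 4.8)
The plan is to localize at one curvature discontinuity and run a Hubacher-type argument, using the mirror equation to write the geodesic curvature of $\del K$ at a reflection point as a \emph{continuous} function of data that survive passage through the discontinuity. Since there are finitely many discontinuity points, it suffices to treat one, say $p\in\del K$, and then combine the local conclusion with the fact that a caustic close to $\del K$ somewhere is close to $\del K$ everywhere up to a bounded factor depending only on the diameter and the curvature bounds (as one already sees for confocal caustics in an ellipse). Write $\kappa_-\neq\kappa_+$ for the two one-sided limits of the geodesic curvature of $\del K$ at $p$, both $\ge\kappa_{\min}>0$; since $\del K$ is $C^1$, the reflection law is defined at every boundary point and the billiard map is continuous on all of $\del K$, $p$ included.

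First I would treat a \emph{convex} caustic $\Gamma$, the case the methods of this paper are built for. Let $\hat\Gamma\subset\mathrm{int}\,K$ be the convex region bounded by $\Gamma$. Every geodesic supporting $\hat\Gamma$ meets $\mathrm{int}\,K$, hence is a chord of $K$, and by the defining property of a caustic it extends to a billiard trajectory staying tangent to $\Gamma$; so the two legs at a reflection point $q\in\del K$ are precisely the two supporting geodesics of $\hat\Gamma$ through $q$, and the inner normal at $q$ bisects the (positive) angle between them. Write $a(q),b(q)$ for the distances from $q$ to the two tangency points on $\Gamma$ and $\theta(q)$ for the common angle the legs make with $T_q\del K$. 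As $\hat\Gamma$ is a fixed strictly convex body, its supporting geodesics from $q$ and their tangency points depend continuously on $q$, and so does $T_q\del K$ since $\del K$ is $C^1$; hence $a,b,\theta$ are continuous along $\del K$, in particular across $p$. Now I would invoke the mirror equation: at a reflection point $q$ at which $\del K$ is $C^2$,
$$ G\bigl(a(q)\bigr)+G\bigl(b(q)\bigr)=\frac{2\,\kappa(q)}{\sin\theta(q)}, $$
where $\kappa(q)$ is the geodesic curvature of $\del K$ at $q$ and $G=\coth$ in $\H^2$, $G=\cot$ in $\S^2_+$ (in the precise form recorded in Section~\ref{Sec_Prelim}). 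Applying this at points $q\neq p$ near $p$ and letting $q\to p$ from each side, the left side tends from either side to the single value $G(a(p))+G(b(p))$ and $\sin\theta(q)\to\sin\theta(p)>0$, while $\kappa(q)\to\kappa_-$ or $\kappa(q)\to\kappa_+$ according to the side; comparing the two one-sided limits forces $\kappa_-=\kappa_+$, a contradiction. So no convex caustic can lie close enough to $\del K$ for the mirror equation to be legitimate at reflection points near $p$.

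To reach the theorem as stated — no caustic, convex or not, meets some neighborhood of $\del K$ — I would add the standard fact that a caustic contained in a thin enough tubular neighborhood of $\del K$ is automatically convex: this is where $\kappa_{\min}>0$ enters, the associated invariant curve of the billiard map being, by a Birkhoff-type graph theorem, a graph over $\del K$ with small reflection angle whose envelope is a convex curve close to the strictly convex $\del K$. Together with the preceding paragraph and the uniformity remark above, this produces a neighborhood of $\del K$ that no caustic can meet.

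The step I expect to be the real obstacle is the one compressed into ``the mirror equation is legitimate at reflection points near $p$'': one must know that the focal distances in it are honestly defined at reflection points arbitrarily close to $p$ — equivalently that the map from a tangency point on $\Gamma$ to the corresponding reflection point on $\del K$ is differentiable there — which requires controlling the regularity of $\Gamma$ near its two tangency points over $p$, precisely the delicate point in Hubacher's original argument; this regularity may only be available when $\Gamma$ is close to $\del K$, which would be exactly why the conclusion is stated near the boundary. Two smaller matters also need care: the degenerate possibility $\sin\theta(p)=0$ must be excluded (it cannot occur for a genuine convex caustic, which subtends a positive visual angle from every boundary point), and in $\S^2_+$ one must keep $a(q),b(q)$ in the range where $\cot$ is positive and finite, for which a local form of the diameter bound of Theorem~\ref{Thm_Main} suffices.
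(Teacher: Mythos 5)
Your central step fails, and it fails against the paper's own example. The remark following Theorem~\ref{Thm_Hubbacher_Hyperbolic} exhibits a table satisfying all the hypotheses (curvature bounded away from zero, with jump discontinuities at six gluing points) that nevertheless possesses a convex caustic: the string construction around an equilateral triangle, whose caustic is the triangle itself. Your argument, as written, would rule out \emph{every} convex caustic in such a table: you let $q\to p$ from both sides, assert that $G(a(q))+G(b(q))$ and $\sin\theta(q)$ have common two-sided limits, and deduce $\kappa_-=\kappa_+$. The hidden false assumption is ``as $\hat\Gamma$ is a fixed strictly convex body, its supporting geodesics from $q$ and their tangency points depend continuously on $q$.'' A convex caustic need not be strictly convex --- Definition~\ref{def_caustic} only requires that it bound a convex set --- and when the caustic contains a geodesic segment the tangency point, hence $a(q)$ and $b(q)$, jumps precisely as the supporting line sweeps across that segment. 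In the triangle example this jump occurs exactly at the six curvature discontinuities of $\partial K$, so both sides of the mirror equation are discontinuous there and no contradiction arises. Your proposed repair (a caustic in a thin collar is ``automatically convex'') does not close this gap: convex is not strictly convex, and nothing forces a caustic near the boundary to have continuously varying tangency points. Restricted to strictly convex caustics your argument would at best prove a different, global nonexistence statement, not the theorem; and the auxiliary claims you lean on (that closeness to $\partial K$ somewhere propagates to closeness everywhere, and that invariant circles near the zero section have convex envelopes) are themselves unproven.

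The paper's proof takes an entirely different, phase-space route that is insensitive to the geometry of the caustic. Proposition~\ref{prop-the-big-one} produces a box $I\times[0,\varepsilon)$ over the discontinuity point that no invariant circle can cross: two orbits on the same invariant circle, tracked for a fixed number $n$ of reflections on either side of $p$ using the expansion \eqref{eq-taylor} of the billiard map and the jump $\kappa_1<\kappa_0$, are forced to cross, violating the no-crossing property of Lemma~\ref{lem-no-crossing}. This local statement is then upgraded to a full collar of $\partial K\times\{0\}$ via Birkhoff's graph theorem and Arzel\`a--Ascoli, together with Lemma~\ref{lem-step2}, which forbids a nontrivial invariant circle from touching the zero section (area preservation plus the twist condition). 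If you want to salvage a mirror-equation approach you would have to control the regularity and strict convexity of the caustic near its tangency points over $p$, which is precisely the hard content your final paragraph defers.
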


\begin{rem} {\rm
As in the Euclidean case (cf.~\cite[Figure 3]{Hu}), an example of a convex billiard table $K$ in $\H^2$ (or $\S^2_+$) which satisfies the conditions of Theorem~\ref{Thm_Hubbacher_Hyperbolic} above may be obtained via the classical ``string-construction" around an equilateral triangle (see Section~\ref{Sec_Prelim} below for details, in particular Remark~\ref{rem-convexity-of-string-construction}). 
The boundary $\partial K$, which consists of six
pieces of hyperbolic (or spherical) ellipses, is globally $C^1$, but the curvature is discontinuous in
the six points where the ellipses are glued together.  
\textcolor{black}{Note, moreover, that this example demonstrates that under the assumptions of Theorem \ref{Thm_Hubbacher_Hyperbolic}, the billiard table may still possess convex caustics away from the boundary.
} 
}
\end{rem}

\noindent {\bf The paper is organized as follows:} in Section~\ref{Sec_Prelim} we recall some basic definitions and facts regarding billiard dynamics. 
In Section~\ref{sec-proof-main-thm} we prove Theorem~\ref{Thm_Main}, and in Section~\ref{sec-proof-of-hyp-Hubacher} we prove Theorem~\ref{Thm_Hubbacher_Hyperbolic}.

\noindent{\bf Notations:} 
In what follows, $\H^2$ denotes the hyperbolic plane, and $\S^2_+$ denotes an open hemisphere. The distance function on either manifold is denoted by $d(x,y)$, and 
the distance from a point $x$ to a set $A$ is given by $d(x,A)=\inf_{a\in A}\{d(x,a) \}$. A set $K$ is said to be convex if for every pair of points in $K$, the (unique) geodesic segment joining them is contained in $K$, and is said to be strictly convex if it does not contain
a geodesic segment in its boundary. The convex hull of two sets $A,B$ is denoted by ${\rm Conv}(A,B)$. 
The inradius of a convex set $K$ is the maximal radius of a disk contained in $K$. The diameter of a convex set $K$ is denoted by $D=\max\{d(x, y) \, | \, x,y \in \partial K \}$. The geodesic curvature of a regular curve $\gamma$ is denoted by $\kappa = \kappa_\gamma$. Finally, we denote by ${\rm Per(K)}$ the perimeter of the set $K$.

\noindent {\bf Acknowledgements:} We are grateful to Misha Bialy and Serge Tabachnikov for useful comments.
DIF was partially supported by the U.S. National Science Foundation Grant
DMS-1101636. YO is partially supported by the European
Research Council starting grant No. 637386, and by the ISF grant No. 667/18. DR is partially supported by the SFB/TRR 191 ‘Symplectic Structures in Geometry, Algebra and Dynamics’, funded by the DFG (Projektnummer 281071066 – TRR 191).

\section{Preliminaries}\label{Sec_Prelim}

Let $K$ be a convex set (in either ${\mathbb H}^2$ or ${\mathbb S}^2_+$) with $C^1$-smooth boundary $\partial K$.
As in the Euclidean case, the inner billiard (or Birkhoff billiard) in $K$, is the dynamical system corresponding to the free motion of a point particle inside $K$ (i.e. via geodesic lines), and reflecting elastically on impact with $\partial K$, making equal angles with the tangent line at the impact point.
The standard phase space of the billiard map is the cylinder $\partial K \times [0,\pi]$.
Set $s$ for the arclength parameter, $t \in [0,\pi]$ for the angle with the positive tangent,
and let $l$ be the length of $\partial K$.
The billiard map associated with $K$ is the map $\phi : {\mathbb R}/l{\mathbb Z} \times [0,\pi] \rightarrow  {\mathbb R}/l{\mathbb Z} \times [0,\pi]$ which sends a pair $(s_0, t_0)$ representing an impact, to the pair $(s_1, t_1) = \phi(s_0, t_0)$ corresponding to the next impact point (see Figure \ref{fig:billiard_map}). The map $\phi$ is well known to be an area-preserving monotone twist map, with generating map given by the distance function between the two consecutive billiard points (see, e.g.,~\cite{CP}, for a formal presentation of billiard dynamics on surfaces of constant curvature). We recall that the monotone twist condition implies in particular that \begin{equation} \label{eq-mon-twist-cond}
{\frac {\partial s_1} {\partial t_0} } > 0.
\end{equation}
For more details on monotone twist maps we refer the reader, e.g., to Chapter \textsection 1 of~\cite{Sib}. 
\begin{figure}[h]
    \centering
    \includegraphics[scale=0.3]{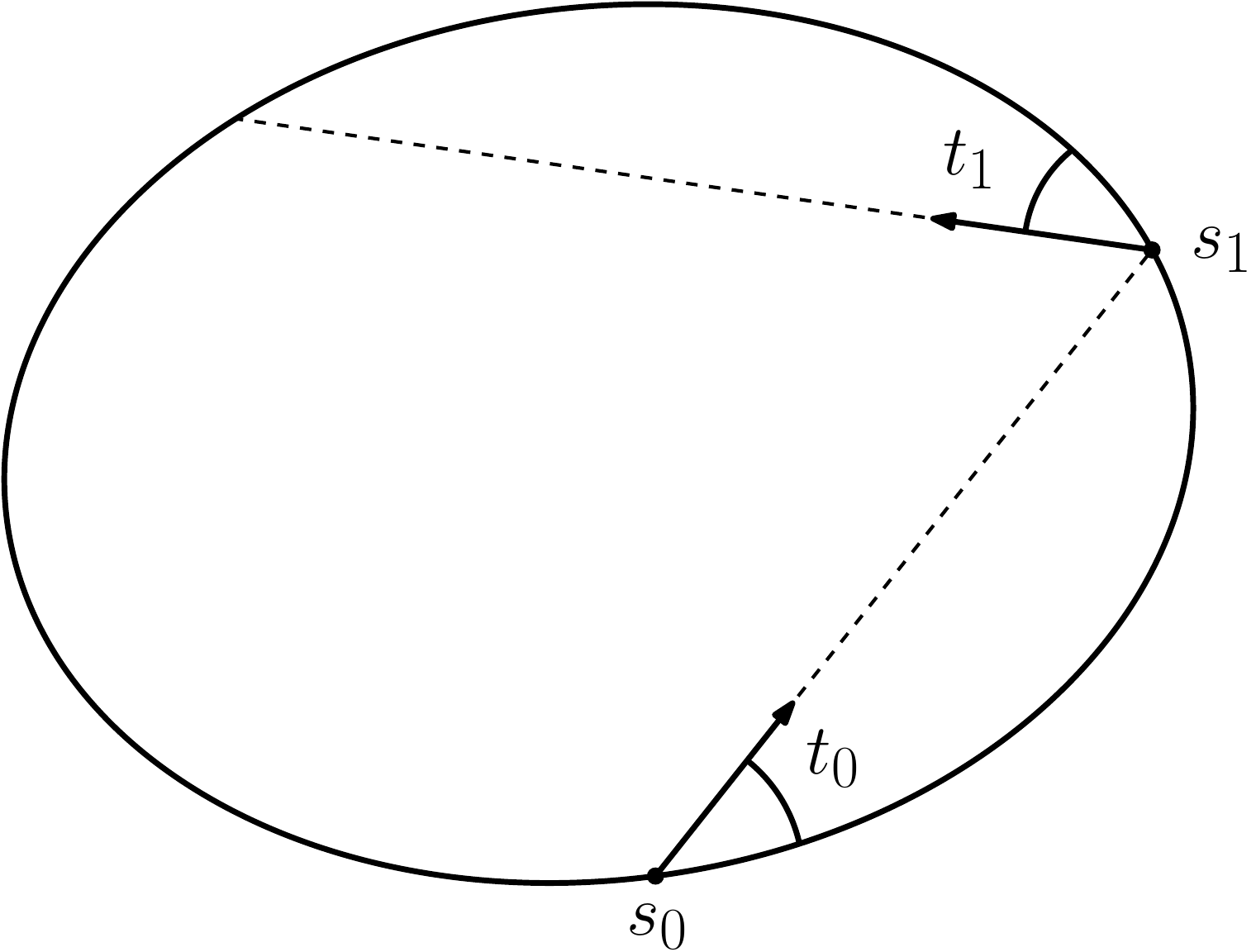}
    \caption{The billiard ball map}
    \label{fig:billiard_map}
\end{figure}


Caustics play an important role in the study of planar  billiards, and they are closely related with the geometry of the billiard table. In this note we consider only ``convex caustics". More precisely,

\begin{definition} \label{def_caustic}
A simple closed curve $\gamma \subset K$ is called a convex caustic if $\gamma$ bounds a convex set, and 
if any trajectory of the billiard flow tangent to $\gamma$, remains tangent after the reflection with $\partial K$.
\end{definition}

We remark that the notion of caustics for planar convex billiard tables is closely related with the notion
of an ‘invariant circle’ of the associated monotone twist map.
In particular, any convex caustic gives rise to such an invariant circle (see  e.g.,~\cite{GutKat, Tab}).

We recall next the classical ``string construction": to every convex set $C$ in the plane one can
associate a 1-parameter family of convex billiard tables $\{K_L\}$, where $L>0$, such that each table has
$C$ as a caustic. Roughly speaking, $\partial K_L$ is obtained by the following procedure: wrap a
loop of inelastic string of length $L$ around C. Then, pull the string tight away from $C$
to produce a point $p$ on the boundary of the billiard table $K_L$. Finally, move the point $p$
around $C$, keeping the string tight, to obtain the rest of $\partial K_L$.
Note that this string construction, which was originally studied in the context of Euclidean geometry (see~\cite{Sto, Tur}), can be naturally generalized to surfaces of constant curvature (see~\cite{Glut}, and Section \textsection 3 of~\cite{GutTab} for the more general setting of Finsler billiards). More precisely, given a convex set $C$ (in either ${\mathbb H}^2$ or ${\mathbb S}^2_+$) and $L >0$, we set 
$$K_L = \{ q  \ | \ {\rm Per}  \left ({\rm Conv}(q,C) \right) \leq   {\rm Per}(C) + L \}.$$
It is known  that $K_L$ is a  billiard table for which $C$ is a convex caustic\footnote{In ${\mathbb S}^2_+$, $L$ must be sufficiently small in order for $K_L$ to be a convex set whose boundary $\partial K_L$ is a closed curve.}, and conversely, for any caustic $C$ in a convex billiard table $K$ the function ${\rm Per}  \left ({\rm Conv}(\cdot ,C) \right) $ on $\partial K$ is constant  (see, e.g., Lemma 3.6 in~\cite{GutTab}). 
The numerical value $L$ above is called the Lazutkin parameter of the caustic $C$ (see, e.g.~\cite{GutKat}).  
\begin{rem} \label{rem-convexity-of-string-construction}
{\rm Note that in $\mathbb H^2$, the set $K_L$ is convex for every $L>0$. Indeed, when $C$ is a segment, the convexity of the
``hyperbolic ellipse" $K_L$ follows from the convexity of the hyperbolic distance
function (see e.g., Theorem 2.5.8 in~\cite{Thur}). When $C$ is a convex polygon, $K_L$
is convex since its boundary is obtained by gluing a finite number of hyperbolic
elliptical arcs, and the normal to $\partial K_L$ is continuous at the gluing
points. 
A standard approximation argument implies the general case.}
\end{rem}

The following mirror equation for billiards in $\H^2$ and $\S^2_+$ can be found, e.g., in~\cite{Bia,GutSmiGut}.
Denote by $a$ and $b$ the lengths of the two tangent lines from a point $m \in \partial K$ to the caustic $C$, and let $\theta$ be the angle between either of these lines and $\partial K$ (see Figure \ref{fig:mirror}). 
	\begin{equation}\label{eq-mirror}
	\begin{cases}
	\begin{aligned}
	\displaystyle \frac{1}{\tanh (a)} &+ \frac{1}{\tanh (b)}&= \frac{2\kappa(m)}{\sin(\theta)}, \quad & \text{in $\H^2$}, \\
    \displaystyle \frac{1}{\tan (a)} &+ \frac{1}{\tan (b)}&= \frac{2\kappa(m)}{\sin(\theta)}, \quad & \text{in $\S^2_+$}.   
	\end{aligned}
	\end{cases}
	\end{equation}
Here $\kappa(m)$ stands for the curvature of $\partial K$ at the point $m$.

\begin{figure}[h]
    \centering
   \includegraphics[scale=0.7]{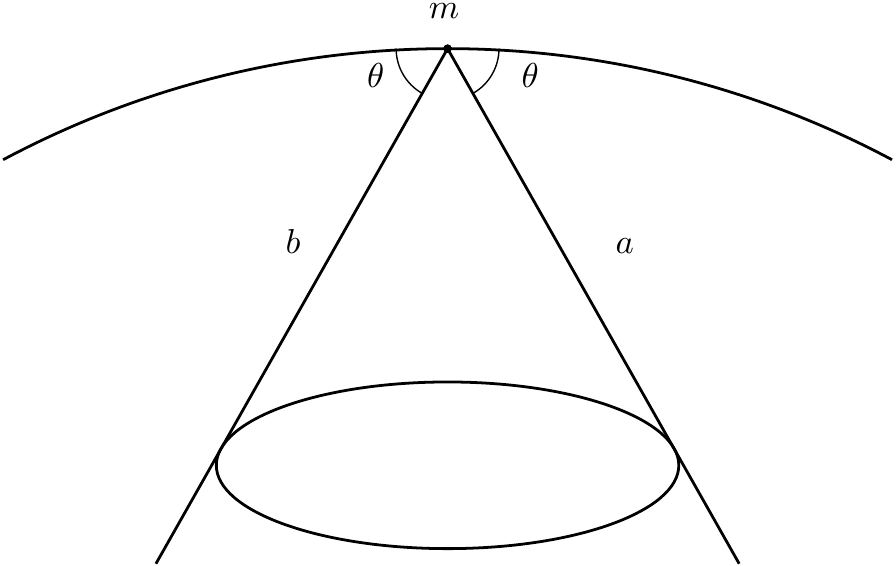}
    \caption{The mirror equation}
    \label{fig:mirror}
\end{figure}

\section{Caustic-free regions away from the boundary} \label{sec-proof-main-thm}

In this section we prove Theorem~\ref{Thm_Main}. The proofs for $\mathbb H^2$ and $\mathbb S^2_+$ are very similar, and we provide full details only for the hyperbolic case. We refer the reader to Remark \ref{rem_spherical-3.3} and Remark \ref{rem_spherical-3.4} for the adjustments in the spherical case.
Let $K \subset {\mathbb H}^2$ be a convex billiard table with
$C^2$-smooth boundary. 
Let $r = r(K)$ be the inradius of $K$, and 
consider the function $$\delta : K \rightarrow [0,r(K)],  \ \ {\rm given \ by \ }  \delta (x) = d(x,\partial K).$$ 
We provide an upper bound on the value that $\delta$ may attain on a convex caustic. For that purpose, given a convex caustic $\gamma \subset K$, we denote 
by $\delta_{\gamma}$ the maximal
distance from $\gamma$ to $\partial K$, i.e.,
	\begin{equation*}
		\delta_{\gamma}
		=\max_{x\in \gamma} \delta(x).
	\end{equation*}
The main ingredient in the proof of Theorem~\ref{Thm_Main} is the following upper bound of $\delta_{\gamma}$ in terms of the diameter $D = D(K)$ and the minimal
	curvature  $\kappa_{\min} = \kappa_{\min}(K)$  of the billiard table $K$.
	\begin{prop}\label{Prop_Main}
	Let $K\subset {\mathbb H}^2$ be a $C^2$-smooth convex billiard table, and let $\gamma \subset K$ be a convex caustic. Then,
		\begin{equation} \label{Ineq_delta-gamma}
			\tanh \delta_{\gamma} <
			\sqrt{2}\kappa_{\min} \tanh^{\frac{3}{2}}(D) \sinh^{\frac{1}{2}}(D).
		\end{equation}
	\end{prop}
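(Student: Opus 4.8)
The plan is to follow the strategy of Gutkin--Katok: locate the point of $\gamma$ that sits deepest inside $K$, look at the billiard chord tangent to $\gamma$ there, and then feed the mirror equation \eqref{eq-mirror} with purely metric information about that chord. Concretely, let $m^\ast\in\gamma$ be a point realizing $d(m^\ast,\partial K)=\delta_\gamma$ (it exists by compactness); we may assume $\delta_\gamma>0$, since otherwise \eqref{Ineq_delta-gamma} is trivial. After a standard approximation one may assume the nearest point $p\in\partial K$ to $m^\ast$ is unique, so the geodesic segment $[p,m^\ast]$, of length $\delta_\gamma$, meets $\partial K$ orthogonally at $p$; and since $m^\ast$ maximizes $d(\cdot,\partial K)$ along $\gamma$, the gradient of $d(\cdot,\partial K)$ at $m^\ast$ (the unit vector along $[p,m^\ast]$) is orthogonal to $T_{m^\ast}\gamma$, so $[p,m^\ast]$ also meets $\gamma$ orthogonally at $m^\ast$. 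Consequently the geodesic $\ell$ tangent to $\gamma$ at $m^\ast$ is orthogonal to $[p,m^\ast]$; extend $\ell$ to a chord of $K$ with endpoints $A,B\in\partial K$ lying on the two sides of $m^\ast$. This chord is exactly the billiard trajectory tangent to $\gamma$ at $m^\ast$, and we set $a=d(A,m^\ast)$, $b=d(B,m^\ast)$.

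The next ingredient is a handful of elementary metric inequalities about this chord. First, since $A,B\in\partial K$ we get $a\ge\delta_\gamma$ and $b\ge\delta_\gamma$, and since the chord $[A,B]$ joins two points of $\partial K$, $a+b=d(A,B)\le D$. Second, an angle estimate: let $\theta_A$ be the angle $\ell$ makes with $\partial K$ at $A$, and drop the perpendicular from $m^\ast$ onto the support geodesic of $\partial K$ at $A$, with foot $F$; in the hyperbolic right triangle $A F m^\ast$ one has $\sin\theta_A=\sinh\!\big(d(m^\ast,F)\big)/\sinh a$. Because the support geodesic at $A$ lies outside $K$, the segment from $m^\ast$ to $F$ crosses $\partial K$, whence $d(m^\ast,F)\ge d(m^\ast,\partial K)=\delta_\gamma$; thus $\sin\theta_A\ge\sinh\delta_\gamma/\sinh a$, and symmetrically $\sin\theta_B\ge\sinh\delta_\gamma/\sinh b$.

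Now curvature enters through the mirror equation. At $A$ the incoming and outgoing billiard chords are precisely the two tangent geodesics from $A$ to $\gamma$; they make equal angle $\theta_A$ with $\partial K$, and one of the two tangent lengths at $A$ equals $a$. Substituting into the hyperbolic line of \eqref{eq-mirror}, using $\kappa(A)\ge\kappa_{\min}$, the bound $\sin\theta_A\ge\sinh\delta_\gamma/\sinh a$, and $\delta_\gamma\le a\le D$ (so $\sinh a\le\sinh D$, $\tanh a\le\tanh D$), one gets an inequality of the form $\sinh\delta_\gamma<2\kappa_{\min}\cdot(\text{expression in }a,D)$, and likewise at $B$. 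Combining the two inequalities together with $a+b\le D$ and the arithmetic--geometric mean inequality should collapse the estimate to $\tanh^2\delta_\gamma<2\kappa_{\min}^2\tanh^3(D)\sinh(D)$, which is \eqref{Ineq_delta-gamma}; the strictness is inherited from $\sin\theta<1$ (a billiard chord is never tangent to $\partial K$) together with a strictly positive term discarded along the way.

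The main obstacle is precisely this last combination. The mirror equation supplies only the one-sided bound $\kappa\ge\kappa_{\min}$, so one has to be careful to retain \emph{both} tangent lengths at each of $A$ and $B$ — rather than crudely discarding one — and to organize the two mirror identities so that it is the \emph{minimal} curvature, not a larger curvature quantity, that survives in the final bound, while the precise exponents $\tanh^{3/2}\sinh^{1/2}$ and the constant $\sqrt2$ emerge correctly; checking that $a+b\le D$ is exactly the ingredient that, via AM--GM, produces the stated closed form is the crux of the argument. Finally, the spherical case is entirely parallel: one replaces $\tanh,\sinh,\cosh$ by $\tan,\sin,\cos$ throughout and uses the second line of \eqref{eq-mirror}, and the hypothesis $D<\tfrac{\pi}{2}$ guarantees that all tangent lengths and right triangles involved remain within the range where these spherical identities and the string construction behave well.
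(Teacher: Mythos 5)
There is a genuine gap here, and it is structural rather than technical. Your argument is entirely local: you work at the two endpoints $A,B$ of the chord tangent to $\gamma$ at its deepest point $m^\ast$, and the only curvature data the mirror equation can deliver at those points is $\kappa(A)$ and $\kappa(B)$. These are bounded \emph{below} by $\kappa_{\min}$, whereas what you need is an \emph{upper} bound on $\delta_\gamma$ proportional to $\kappa_{\min}$. Running your estimates through honestly, the angle bound $\sin\theta_A\ge\sinh(\delta_\gamma)/\sinh(a)$ combined with the mirror equation at $A$ gives something of the shape $\sinh\delta_\gamma\le\kappa(A)\,\sinh(D)\tanh(D)$, and there is no way to replace $\kappa(A)$ by $\kappa_{\min}$: the point of minimal curvature may lie on the opposite side of the table, invisible to the chord through $m^\ast$. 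No reorganization of the two mirror identities at $A$ and $B$, nor retaining both tangent lengths, can fix this, because $\kappa_{\min}$ simply does not occur in those identities. The difficulty you flag in your final paragraph as the ``crux'' is therefore not a bookkeeping issue but the absence of the key idea.

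The paper resolves this by routing the argument through a \emph{global} invariant of the caustic, the Lazutkin parameter $L=|mA|+|mB|-|\wideparen{AB}|$, which by the string construction takes the same value at every $m\in\partial K$. This constancy is precisely the transfer mechanism your proof lacks: Lemma \ref{Prop_Low-Bd} bounds $L$ from below in terms of $\delta_\gamma$ and $D$ (working at the boundary point farthest from $\gamma$, via the auxiliary quantity $\bar{\delta}_\gamma\ge\delta_\gamma$, a perpendicular construction, and monotonicity of the perimeter under inclusion), while Lemma \ref{Prop_Up-Bd} bounds $L$ from above by $2\kappa^2(m)\tanh^2(D)\sinh(D)$ at an \emph{arbitrary} boundary point $m$, so that one may then minimize over $m$ and legitimately bring in $\kappa_{\min}$. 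Several of your metric ingredients (the right-triangle angle estimate, the use of $a+b\le D$, the law of cosines at a boundary point) do reappear inside those two lemmas, but without the intermediate quantity $L$ they cannot be assembled into \eqref{Ineq_delta-gamma}.
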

Before proving Proposition~\ref{Prop_Main}, let us first formally deduce Theorem \ref{Thm_Main} from it.

\noindent {\bf Proof of Theorem \ref{Thm_Main}.}
Note that if $\gamma$ is a convex caustic, then
by \eqref{Ineq_delta-gamma}, for every $x \in \gamma$ one has 
	$$ \delta(x) \leq \delta_{\gamma} < 
	{\rm arctanh}
	\left(
	\sqrt{2}\kappa_{\min} \tanh^{\frac{3}{2}}(D) \sinh^{\frac{1}{2}}(D)
	\right) = \varepsilon.$$
	Thus, $x \in  (\partial K)_{\varepsilon}$, and hence $\gamma \subseteq (\partial K)_{\varepsilon}$ as required. 
	\qed
	
	\begin{rem}\label{rmk:hyperbolic-stadium}{\rm
As noted above, in $\mathbb{H}^2$ it may happen that the caustic-free region $K \setminus (\partial K)_\eps$ is disconnected (see Figure \ref{fig:stadium}). Consider two hyperbolic disks of radius $R$, and their convex hull $S$ (this is a hyperbolic analog of the classical ``stadium'' billiard table). If the distance between the disks is sufficiently large, the minimal width of $S$ (in the sense of Santal\'{o} \cite{Santalo-width}, i.e., the minimal projection of $S$ on a geodesic line normal to its boundary) is arbitrarily small. The table $K$ is a strictly convex approximation of $S$.
This is an adaptation of the following observation due to Badt \cite{Badt}. In the hyperbolic plane, the inradius of a convex domain is not a lower bound for the minimal width. This shows in particular that the minimal width is not monotone with respect to inclusion.
}
\end{rem}



The idea behind the proof of Proposition~\ref{Prop_Main} is the following. First, we use the fact that for a convex caustic $\gamma \subset K$, the billiard table $K$ can be obtained from $\gamma$ by a string construction (see Section~\ref{Sec_Prelim} above). We recall that the outcome of a string construction (with different string lengths) is a family of billiard tables that are parameterized by the Lazutkin parameter $L$. Proposition~\ref{Prop_Main} is proven by comparing both sides of inequality~$(\ref{Ineq_delta-gamma})$ with the Lazutkin parameter associated with the caustic $\gamma$, using the mirror equation~$(\ref{eq-mirror})$, hyperbolic trigonometry, and some other geometric features of the Lazutkin parameter $L$. We divide the argument into two lemmas: 
	%
	
	\begin{lem}\label{Prop_Low-Bd}
		Let $K\subset {\mathbb H}^2$ be a $C^2$-smooth convex billiard table, and $\gamma \subset K$ be a convex caustic with Lazutkin parameter $L$. Then, 
		\[
		%
		%
		\frac{\tanh^2 (\delta_{\gamma})}{ \tanh (D)} < L.
		\]
	\end{lem}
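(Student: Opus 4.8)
The plan is to exploit the string characterization of the table. Since $\gamma$ is a convex caustic with Lazutkin parameter $L$, one has $K=K_L$, so every $q\in K$ satisfies ${\rm Per}({\rm Conv}(q,\gamma))\le {\rm Per}(\gamma)+L$; I apply this at a well-chosen auxiliary point. Fix $x_0\in\gamma$ with $d(x_0,\partial K)=\delta_\gamma$; since $\gamma$ lies in the interior of $K$, the closed disk $\bar B(x_0,\delta_\gamma)$ is contained in $K$. Let $\ell$ be a supporting geodesic of $\gamma$ at $x_0$, and let $z$ be the point at distance $\delta_\gamma$ from $x_0$ on the geodesic through $x_0$ orthogonal to $\ell$, on the side of $\ell$ opposite to $\gamma$. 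Then $z\in\bar B(x_0,\delta_\gamma)\subseteq K$; since $d(z,\ell)=\delta_\gamma$ and $\ell$ separates $z$ from $\gamma$, the open disk $B(z,\delta_\gamma)$ is disjoint from $\gamma$, so $d(z,\gamma)=\delta_\gamma$ and $x_0$ is the (unique) nearest point of $\gamma$ to $z$. Writing $p,p'$ for the points where the two tangent geodesics from $z$ touch $\gamma$ and computing ${\rm Per}({\rm Conv}(z,\gamma))$ from these two tangent segments and the complementary (``far'') arc of $\gamma$, the relation $z\in K_L$ yields
\[ L\;\ge\; d(z,p)+d(z,p') - |\widehat{p p'}|, \]
where $\widehat{p p'}$ is the arc of $\gamma$ from $p$ to $p'$ facing $z$; note $x_0\in\widehat{p p'}$.

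Second, I estimate $|\widehat{p p'}|$ from above. Split $\widehat{p p'}=\widehat{p x_0}\cup\widehat{x_0 p'}$. The sub-arc $\widehat{p x_0}$ is a convex arc of $\gamma$ tangent to the geodesic $\overline{zp}$ at $p$ and to $\ell$ at $x_0$; these two geodesics meet at the point $w\in (z,p)\cap\ell$, and convexity forces $\widehat{p x_0}$ into the geodesic triangle with vertices $x_0,w,p$, so $|\widehat{p x_0}|\le d(x_0,w)+d(w,p)$. Applying the same to $\widehat{x_0 p'}$ (with $w'\in (z,p')\cap\ell$) and substituting $d(z,p)=d(z,w)+d(w,p)$ and $d(z,p')=d(z,w')+d(w',p')$, we obtain
\[ L\;\ge\;\bigl(d(z,w)-d(x_0,w)\bigr)+\bigl(d(z,w')-d(x_0,w')\bigr). \]

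Third, this reduces the problem to a scalar estimate. The segment $[x_0,z]$ meets $\ell$ orthogonally and has length $\delta_\gamma$, and $w,w'\in\ell$, so the hyperbolic right-triangle identity gives $\cosh d(z,w)=\cosh\delta_\gamma\,\cosh d(x_0,w)$, and likewise for $w'$. Setting $u=d(x_0,w)$, $u'=d(x_0,w')$ and $f(t)={\rm arccosh}\bigl(\cosh\delta_\gamma\,\cosh t\bigr)-t$, we have $L\ge f(u)+f(u')$, and a short computation shows that $f$ is positive, decreasing, and convex on $[0,\infty)$. Moreover $w$ and $w'$ lie on opposite sides of $x_0$ along $\ell$ — because $\widehat{p p'}$ is tangent to $\ell$ at $x_0$, so $p$ and $p'$ lie on opposite sides of $x_0$ relative to $\ell$ — and both lie on the geodesic chord $\ell\cap K$, whose length is at most $D$; hence $u+u'=d(w,w')\le D$. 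Since $f$ is decreasing and convex, $f(u)+f(u')$ is minimized over $\{u,u'\ge 0,\ u+u'\le D\}$ at $u=u'=D/2$, so
\[ L\;\ge\;2\,{\rm arccosh}\bigl(\cosh\delta_\gamma\,\cosh\tfrac D2\bigr)-D, \]
and it remains to verify the elementary inequality $2\,{\rm arccosh}(\cosh\delta_\gamma\cosh\tfrac D2)-D>\tanh^2\delta_\gamma/\tanh D$, equivalently (after applying $\cosh$ and the addition formula) $\cosh\delta_\gamma\cosh\tfrac D2>\cosh\bigl(\tfrac D2+\tfrac{\tanh^2\delta_\gamma}{2\tanh D}\bigr)$.

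The genuine obstacle is precisely this last inequality: its two sides already agree to leading order in $\delta_\gamma$, so the crude ``convex arc $\le$ enclosing broken line'' bound is only barely enough — one must keep the factor $\tanh\tfrac D2$ rather than bounding it by $1$, and must use $u+u'\le D$ rather than merely $u,u'\le D$; the bound $\delta_\gamma\le r(K)\le D/2$ is convenient in closing the one-variable estimate. For $\S^2_+$ the same scheme works verbatim with $\cosh,\tanh,{\rm arccosh}$ replaced by $\cos,\tan,\arccos$, the hypothesis $D<\pi/2$ guaranteeing that all the geodesic triangles that appear are nondegenerate.
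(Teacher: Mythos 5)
Your geometric reduction is sound and is in fact a close cousin of the paper's argument: both proofs place an apex at distance $\delta$ from the caustic along a normal, draw the two tangent geodesics, cut them with the tangent line to $\gamma$ at the foot of the normal, invoke Crofton/perimeter monotonicity to bound the arc by the broken line, and then apply hyperbolic right-triangle trigonometry. Your variant has one nice feature: by taking the apex to be the interior point $z$ at distance $\delta_\gamma$ from $x_0$ (rather than a boundary point realizing $\bar\delta_\gamma=\max_{m\in\partial K}d(m,\gamma)$, as the paper does), you avoid the auxiliary comparison $\bar\delta_\gamma\ge\delta_\gamma$ of Lemma~\ref{Lem_Comp-Deltas}. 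The steps up to $L\ge f(u)+f(u')$ with $f(t)={\rm arccosh}(\cosh\delta_\gamma\cosh t)-t$, and the optimization to $L\ge 2\,{\rm arccosh}\bigl(\cosh\delta_\gamma\cosh\tfrac D2\bigr)-D$ using $u+u'\le D$ and the convexity and monotonicity of $f$, are all correct (modulo the routine justification that ${\rm Per}({\rm Conv}(z,\gamma))\le{\rm Per}(\gamma)+L$ for the \emph{interior} point $z$, which follows by pushing $z$ out to $\partial K$ along a ray and using perimeter monotonicity).

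The genuine gap is that the proof stops exactly where you say the real work begins: the scalar inequality $2\,{\rm arccosh}\bigl(\cosh\delta_\gamma\cosh\tfrac D2\bigr)-D>\tanh^2\delta_\gamma/\tanh D$ is asserted, labelled ``the genuine obstacle,'' and never proven. It is not a throwaway step: expanding for small $\delta_\gamma$, the left side is $\sim\delta_\gamma^2/\tanh(D/2)$ and the right side is $\sim\delta_\gamma^2/\tanh D$, so the two sides differ only by the factor $\tanh D/\tanh(D/2)=2/(1+\tanh^2(D/2))$, which tends to $1$ as $D\to\infty$; numerically the inequality appears to hold on the whole range $0<\delta_\gamma\le D/2$, but with margins as small as a fraction of a percent, so it genuinely requires an argument (controlling the higher-order terms, or a monotonicity/convexity argument in $\delta_\gamma$ for fixed $D$), and none is supplied. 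This is precisely the difficulty the paper's proof is engineered to avoid: instead of committing to the single bound at $u=u'=D/2$, the paper keeps the tangency angle $\theta_1$ as a free parameter, derives the two independent inequalities $\tanh\bar\delta_\gamma<L/\bigl(2\tan\tfrac{\theta_1}{2}\bigr)$ and $\tanh\bar\delta_\gamma<\tfrac{2\tan(\theta_1/2)}{1+\tan^2(\theta_1/2)}\tanh D$, and takes the maximum over $\theta_1$ of their minimum; the product of the two bounds immediately gives $\tanh^2\bar\delta_\gamma< L\tanh D$ with no delicate one-variable estimate left over. To complete your route you must either prove your final inequality honestly or restructure the endgame along these two-bound lines.
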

	
	\begin{lem}\label{Prop_Up-Bd}
		Let $K\subset {\mathbb H}^2$ be a $C^2$-smooth convex billiard table, and $\gamma \subset K$ be a convex caustic with Lazutkin parameter $L$. Then,
		$$
		L < 2 \kappa_{\min}^2 \tanh^2 (D) \sinh (D).
		$$
	\end{lem}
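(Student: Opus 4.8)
\textbf{Proof plan for Lemma~\ref{Prop_Up-Bd}.}
The plan is to relate the Lazutkin parameter $L$ to the lengths of the tangent segments from a point of $\partial K$ to the caustic $\gamma$, and then to bound these using the mirror equation~\eqref{eq-mirror}. Fix a point $m \in \partial K$ and let $a, b$ be the lengths of the two tangent geodesic segments from $m$ to $\gamma$, meeting $\partial K$ at angle $\theta$ as in Figure~\ref{fig:mirror}. The first step is to recall, from the string-construction characterization of caustics in Section~\ref{Sec_Prelim}, that $\operatorname{Per}(\operatorname{Conv}(m,\gamma)) = \operatorname{Per}(\gamma) + L$ for all $m \in \partial K$; subtracting the length of the arc of $\gamma$ cut off by the two tangent points and adding back the chord gives an expression for $L$ in terms of $a$, $b$, and the ``defect'' between the arc of $\gamma$ and the corresponding chord. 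In particular, one gets an upper bound of the rough form $L \le a + b$ (possibly with a small correction term that is itself bounded by $a+b$, so that $L \le C(a+b)$ for an explicit constant, or even $L < a+b$ if the arc exceeds its chord, which it does for a convex curve). Taking $m$ to be the point where the shorter of the two tangent lengths is maximized — or more simply bounding $a, b$ uniformly — reduces the problem to bounding $a$ and $b$.

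The second step is to extract an upper bound on $\min(a,b)$, or on $a+b$, from the mirror equation. From $\frac{1}{\tanh a} + \frac{1}{\tanh b} = \frac{2\kappa(m)}{\sin\theta} \ge \frac{2\kappa(m)}{1} \ge 2\kappa_{\min}$ is the wrong direction; instead one uses $\sin\theta \le 1$ to get $\frac{1}{\tanh a} + \frac{1}{\tanh b} \le \frac{2\kappa(m)}{\sin\theta}$, which by itself is not a bound. The right move is to choose the point $m$ cleverly: pick $m$ so that one of the tangent lines is \emph{perpendicular} to $\partial K$ at $m$, i.e., $\theta = \pi/2$ — such a point exists, e.g., take $m$ realizing $\delta_\gamma = d(\gamma, \partial K)$, where the segment from $\gamma$ to $m$ is normal to both curves. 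At such a point $\sin\theta = 1$ and the two tangent segments have comparable length controlled by the geometry; more importantly $\frac{1}{\tanh a} + \frac{1}{\tanh b} = 2\kappa(m) \ge 2\kappa_{\min}$ forces $\tanh a$ and $\tanh b$ — hence $a$ and $b$ — to be bounded \emph{below}, not above. Since we want an upper bound on $L$, the cleaner route is: at \emph{every} point $m$, $2\kappa(m) = \sin\theta\left(\frac{1}{\tanh a}+\frac{1}{\tanh b}\right) \ge \frac{2\sin\theta}{\tanh(\max(a,b))} \ge \frac{2\sin\theta}{\tanh D}$ (using $a,b \le D$), which gives $\sin\theta \le \kappa(m)\tanh D \le$ (some quantity); but $\kappa(m)$ is not bounded above by hypothesis. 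I would therefore instead argue that $\tanh a + \tanh b \le 2\tanh D$ trivially, and separately use the mirror equation with $\sin\theta\le 1$ rearranged as $\tanh a\,\tanh b \le \frac{\tanh a + \tanh b}{2\kappa_{\min}} \le \frac{\tanh D}{\kappa_{\min}}$ — no, again wrong sign. The correct rearrangement is $\frac{2\kappa(m)}{\sin\theta} = \frac{\tanh a + \tanh b}{\tanh a\,\tanh b}$, so $\tanh a\,\tanh b = \frac{\sin\theta(\tanh a+\tanh b)}{2\kappa(m)}$. Combined with the estimate on $L$ from the string construction (which should produce something like $L \lesssim \tanh a\,\tanh b / \tanh D$ after hyperbolic trigonometry on the thin triangle $\operatorname{Conv}(m,\gamma)$), and with $\sin\theta \le 2\kappa_{\min}\tanh D$ coming from Lemma~\ref{Prop_Low-Bd}-type reasoning at the extremal point, one assembles $L < 2\kappa_{\min}^2\tanh^2(D)\sinh(D)$.

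Concretely, I expect the clean argument to run as follows: choose $m \in \partial K$ to be an endpoint of a diameter-type or width-realizing chord so that the tangent lines from $m$ to $\gamma$ make a controlled angle; apply the law of cosines / sines in hyperbolic geometry to the geodesic triangle with vertex $m$ whose opposite side is the chord of $\gamma$ between the two tangency points, writing $L$ (the string defect) as arc-minus-chord plus $a + b$ minus the two pieces, and bound each hyperbolic-trigonometric term using $a, b \le D$ and monotonicity of $\tanh, \sinh$; then invoke the mirror equation at a point where $\sin\theta$ is small — which, by Lemma~\ref{Prop_Low-Bd} or its proof, happens precisely where $\delta_\gamma$ is realized and is controlled by $\kappa_{\min}$ — to convert the factor $a+b$ or $\tanh a\tanh b$ into the factor $\kappa_{\min}^2$. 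The main obstacle will be the bookkeeping in the middle step: getting the string-construction defect $L$ expressed with the \emph{right} powers of $\tanh(D)$ and $\sinh(D)$, since a naive bound $L \le a+b \le 2D$ is far too weak and loses the quadratic dependence on $\kappa_{\min}$; one genuinely needs to exploit that the caustic's arc is close to its chord (a second-order, curvature-driven quantity) and feed the mirror equation in at the correct place. I would expect this to occupy the bulk of the proof, with the hyperbolic trigonometry otherwise routine.
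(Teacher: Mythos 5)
Your proposal correctly identifies the two raw ingredients --- the string-construction identity $L = a + b - |\text{arc}| \le a+b-c$ (with $c$ the chord between the tangency points) and the mirror equation \eqref{eq-mirror} --- but it never finds the step that actually connects them, and the strategy you lean towards (choosing a special point $m$ where $\theta=\pi/2$, or where $\delta_\gamma$ is realized, or an endpoint of a width-realizing chord, and importing a bound on $\sin\theta$ from Lemma~\ref{Prop_Low-Bd}-type reasoning) is not the right one. The paper works at an \emph{arbitrary} $m\in\partial K$ and only at the very end specializes to a point where $\kappa(m)=\kappa_{\min}$; no extremal choice of $m$ is needed, and the factor $\kappa_{\min}^2$ does not come from $\sin\theta$ being small at a special point but from the mirror equation being an \emph{identity}, $\sin\theta = 2\kappa(m)\sinh(a)\sinh(b)/\sinh(a+b)$, valid at every $m$.

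The missing quantitative step is this: in the triangle $\Delta BmA$ the angle at $m$ is $\pi-2\theta$, so the hyperbolic law of cosines plus the addition formula give the exact identity $\cosh(a+b)-\cosh(c) = 2\sinh(a)\sinh(b)\sin^2\theta$, and a mean-value estimate $\frac{\cosh(a+b)-\cosh(c)}{(a+b)-c} > \sinh\left(\frac{a+b}{2}\right)$ converts this into $L\le a+b-c < \frac{2\sinh(a)\sinh(b)}{\sinh\left(\frac{a+b}{2}\right)}\sin^2\theta$. This is where the second-order smallness of $L$ (quadratic in $\theta$, hence quadratic in $\kappa(m)$ after substituting the mirror equation) actually comes from: it is carried by the triangle-inequality defect $a+b-c$, not by the arc-minus-chord defect of $\gamma$, which your outline emphasizes; the arc of $\gamma$ is simply bounded below by its chord and then discarded. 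After substituting the mirror equation and using log-concavity of $\sinh$ (i.e.\ $\sinh^2\left(\frac{a+b}{2}\right)\ge\sinh(a)\sinh(b)$) one obtains $L<2\kappa^2(m)\tanh^2\left(\frac{a+b}{2}\right)\sinh\left(\frac{a+b}{2}\right)\le 2\kappa^2(m)\tanh^2(D)\sinh(D)$, and evaluating at a point $m$ of minimal curvature finishes the proof. Without the identity for $\cosh(a+b)-\cosh(c)$ and the mean-value bound, your outline cannot produce the required powers of $\kappa_{\min}$, $\tanh(D)$, and $\sinh(D)$, so as it stands the proposal has a genuine gap.
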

	\noindent Combining Lemma~\ref{Prop_Low-Bd} and Lemma~\ref{Prop_Up-Bd} one immediately obtains Proposition \ref{Prop_Main}, and hence Theorem \ref{Thm_Main}, in the hyperbolic case. In the spherical case, the analogous results are Equations \eqref{eq_spherical-3.3} and \eqref{eq_spherical-3.4}, which read, respectively,
	\begin{equation*}
	\frac{4 \tan^2\left( \frac{\delta_\gamma}{2} \right)}{\tan (D)} < L \qquad \text{and}\qquad L < \pi \kappa_{\min}^2 \tan^3(D),
	\end{equation*}
	see Remarks \ref{rem_spherical-3.3} and \ref{rem_spherical-3.4}, respectively. From these, one immediately obtains
	\begin{equation*}
	 	 \tan\left( \frac{\delta_\gamma}{2} \right) < \frac{\sqrt{\pi}}{2} \kappa_{\min} \tan^2(D),
	\end{equation*}
	which is the spherical analogue of Proposition \ref{Prop_Main}. The proof of Theorem \ref{Thm_Main} in the spherical case now follows in the same manner as detailed above.
	
	\subsection{Proof of Lemma~\ref{Prop_Low-Bd}} \label{Sec_Low-Bound}
	
For the proof of Lemma \ref{Prop_Low-Bd} we introduce the auxiliary parameter
	$$
	\bdelta_{\gamma} := \max_{m\in \partial K} d(m,\gamma).
	$$
	We remark that $\bdelta_{\gamma}$ is simply the hyperbolic Hausdorff distance between ${\rm Conv}(\gamma)$ and $K$. 
	
	\begin{lem}\label{Lem_Comp-Deltas}
		For any convex caustic $\gamma$ in $K$ one has $\bdelta_{\gamma} \geq \delta_{\gamma}$.
	\end{lem}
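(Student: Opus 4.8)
The plan is to realize the point achieving the maximum in the definition of $\delta_\gamma$ and push it outward to a boundary point, using the convexity of both $\gamma$ and $K$. Concretely, let $x\in\gamma$ be a point with $\delta(x)=d(x,\partial K)=\delta_\gamma$, and let $y\in\partial K$ be the (not necessarily unique) closest boundary point to $x$, so $d(x,y)=\delta_\gamma$. I would then argue that $\bdelta_\gamma\ge d(y,\gamma)=\delta_\gamma$, the last equality because $x\in\gamma$ lies at distance $\delta_\gamma$ from $y$ and no point of $\gamma$ can be closer to $y$.

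The key step is precisely this last claim: that $x$ is the closest point of $\gamma$ to $y$. Suppose for contradiction there were a point $x'\in\gamma$ with $d(y,x')<\delta_\gamma$. Consider the geodesic segment $\sigma$ from $y$ to $x'$; since $\gamma$ bounds a convex set and $x'\in\gamma\subset K$, while $y\in\partial K$, convexity of $K$ gives $\sigma\subset K$. Now I would look at the point where $\sigma$ first meets (the convex region bounded by) $\gamma$ — call it $z$ — and the geodesic from $y$ realizing the distance to $x$. The idea is that a point slightly inside $\sigma$ near $z$, or the point $z$ itself, would be a point of $\gamma$ whose distance to $\partial K$ exceeds $\delta_\gamma$: indeed $d(z,\partial K)\ge d(z,y)$ is not immediately what we want, so instead I would estimate $d(z,\partial K)$ by noting that a disk of radius $\delta_\gamma$ around $x$ lies in $K$, and a nearby disk around $z$ of radius slightly larger than $\delta_\gamma$ still fits, contradicting $z\in\gamma$ together with $\delta(z)\le\delta_\gamma$. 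The cleanest route is: the closed metric ball $B(x,\delta_\gamma)$ is contained in $K$ (this is the definition of $\delta_\gamma=d(x,\partial K)$ as a distance to the boundary of a convex set, so the ball does not cross $\partial K$ and hence lies in $K$); if some $x'\in\gamma$ satisfies $d(y,x')<\delta_\gamma$ then $x'\in B(y,\delta_\gamma)$, and one shows $B(y,\delta_\gamma)\subset B(x,2\delta_\gamma)$ is too crude — rather, I would use that $y$ is a nearest point of $\partial K$ to $x$, so $\partial K$ is supported by the geodesic through $y$ perpendicular to $xy$, and $\gamma$, being convex and inside $K$, lies on the far side; then the nearest point of $\gamma$ to $y$ is attained along the segment $yx$, forcing it to be $x$ and giving $d(y,x')\ge\delta_\gamma$.

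The main obstacle I anticipate is handling the supporting-geodesic argument carefully in the constant-curvature setting, where "perpendicular projection onto a geodesic" and "foot of the perpendicular" behave as in the Euclidean case but require the convexity of the distance function to a geodesic (Theorem 2.5.8 in~\cite{Thur}, already cited in the paper) to guarantee that distance to $\partial K$ increases monotonically as one moves along $yx$ from $y$ toward $x$ and beyond. Once that monotonicity and the support property are in hand, the inequality $\bdelta_\gamma\ge d(y,\gamma)=d(y,x)=\delta_\gamma$ is immediate, completing the proof.
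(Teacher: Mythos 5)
The step that fails is your key claim that the maximizer $x$ of $d(\cdot,\partial K)$ on $\gamma$ is the closest point of $\gamma$ to its own nearest boundary point $y$. Your supporting-geodesic argument only shows that $\gamma$ lies in the half-plane bounded by the geodesic through $y$ perpendicular to $xy$ and containing $x$; points of that half-plane can be arbitrarily close to $y$, so nothing forces the nearest point of $\gamma$ to $y$ to be $x$, nor even prevents $\gamma$ from crossing the segment $yx$ at some other point. Concretely (the picture is Euclidean, but nothing in your argument uses the curvature sign or the caustic property, so it refutes the reasoning): let $K$ be the unit disk and $\gamma$ the circle of radius $0.4$ centered at $(0.5,0)$. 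Then $d(z,\partial K)=1-|z|$ is maximized on $\gamma$ at $x=(0.1,0)$, giving $\delta_\gamma=0.9$, and the nearest boundary point to $x$ is $y=(1,0)$; yet $d(y,\gamma)=0.1$, realized at $(0.9,0)\in\gamma$, a point lying on the segment $yx$ and on the correct side of the supporting line at $y$. The trouble is that the segment from $x$ to $y$ points along the \emph{inward} normal of $\gamma$, so the disk about $y$ through $x$ engulfs part of the region bounded by $\gamma$. (The lemma itself survives: here $m=(-1,0)$ gives $d(m,\gamma)=1.1\ge 0.9$; it is only your choice of witness on $\partial K$ that is wrong.)

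The paper's proof reverses the construction to guarantee the tangency you need: for $x\in\gamma$ one follows the \emph{outward normal to $\gamma$ at $x$} (not the nearest-point direction to $\partial K$) until it meets $\partial K$ at a point $m$. The disk centered at $m$ of radius $d(m,x)$ is then tangent to $\gamma$ at $x$ and lies on the opposite side of the common tangent geodesic from the convex region bounded by $\gamma$; by strict convexity of disks it meets $\gamma$ only at $x$, so $x$ \emph{is} the nearest point of $\gamma$ to $m$, whence $d(x,\partial K)\le d(x,m)=d(m,\gamma)\le\bdelta_\gamma$, and maximizing over $x\in\gamma$ finishes the proof. If you want to keep your outline, replace ``nearest boundary point to $x$'' by ``boundary point on the outward normal of $\gamma$ at $x$'' and the rest of your chain of inequalities goes through.
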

	
	\begin{proof}[{\bf Proof of Lemma~\ref{Lem_Comp-Deltas}}]
		First, note that for any point $y \in \gamma$ there is $m \in \partial K$ with $d(m,y)=d(m,\gamma)$. Indeed, let $l$ be a geodesic ray normal to $\gamma$ at $y$, pointing outwards.
		Denote by $m \in \partial K$ the intersection of $l$ with $\partial K$ (here we use the fact that $\gamma \subset K$), and put $r=d(m,y)$. The closed disk $B = B_{r,m}$ of radius $r$ about $m$ intersects $\gamma$ at $y$, and $\partial B$ is tangent to $\gamma$ at $y$. As the disk $B$ is strictly convex, it follows that $B \cap \gamma = \{y\}$, and hence $$d(m,\gamma) = r = d(m,y).$$
Therefore, for any $y \in \gamma$ we take $m\in \partial K$ as above, and get
		$$
		d(y, \partial K) \leq d(y,m) = d(m,\gamma) \leq \bdelta_{\gamma}.
		$$
		Maximizing over $y \in \gamma$ gives $\delta_{\gamma} \leq \bdelta_{\gamma}$, as claimed (see Figure \ref{fig:delta}).
		\end{proof}
		\begin{figure}[h]
		    \centering
		    \includegraphics[width=0.4\textwidth]{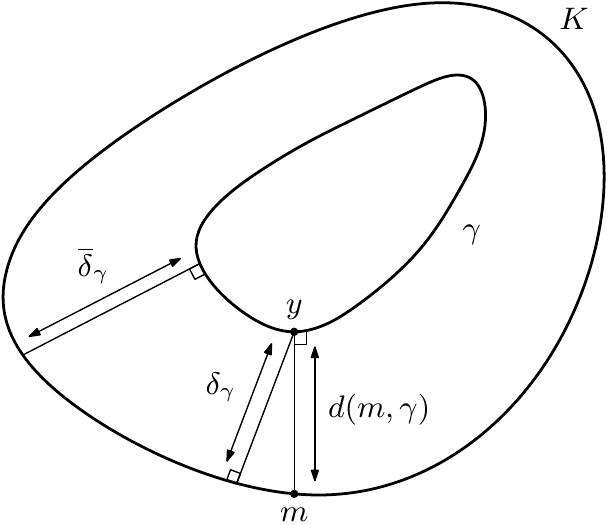}
		    \caption{The inequality $\delta_\gamma = \max_{y \in \gamma}d(y,\partial K) \leq {\bdelta}_\gamma$ for the maximizing point $y \in \gamma$}
		    \label{fig:delta}
		\end{figure}
	
	%
	%
	%
	
	\noindent {\bf Proof of Lemma \ref{Prop_Low-Bd}.}
	In view of Lemma \ref{Lem_Comp-Deltas}, it suffices to prove the inequality
	\begin{equation*}
		%
		%
		\frac{\tanh^2(\bdelta_{\gamma})}{\tanh(D)}
		<
		L.
	\end{equation*}
	Let $m \in \partial K$ and $y \in \gamma$ such that 
	$$
	d(m,y) = d(m, \gamma) = \bdelta_{\gamma}.
	$$
	Note that the geodesic segment $[m,y]$ between $m$ and $y$ is normal
	to the caustic $\gamma$ at $y$, since $y$ is the minimizer of the function
	$\gamma \ni z \mapsto d(m,z)$. Let $P_1, P_2$ be the end points of
	the two tangents from $m$ to $\gamma$, and denote by $Q_1, Q_2$ the
	intersection of these two tangents with the geodesic line normal to
	$[m,y]$ at $y$ (see Figure~\ref{Fig_Low-Bound}).
	
	\begin{figure}[h]
		\centering
		\includegraphics[width=0.4\textwidth]{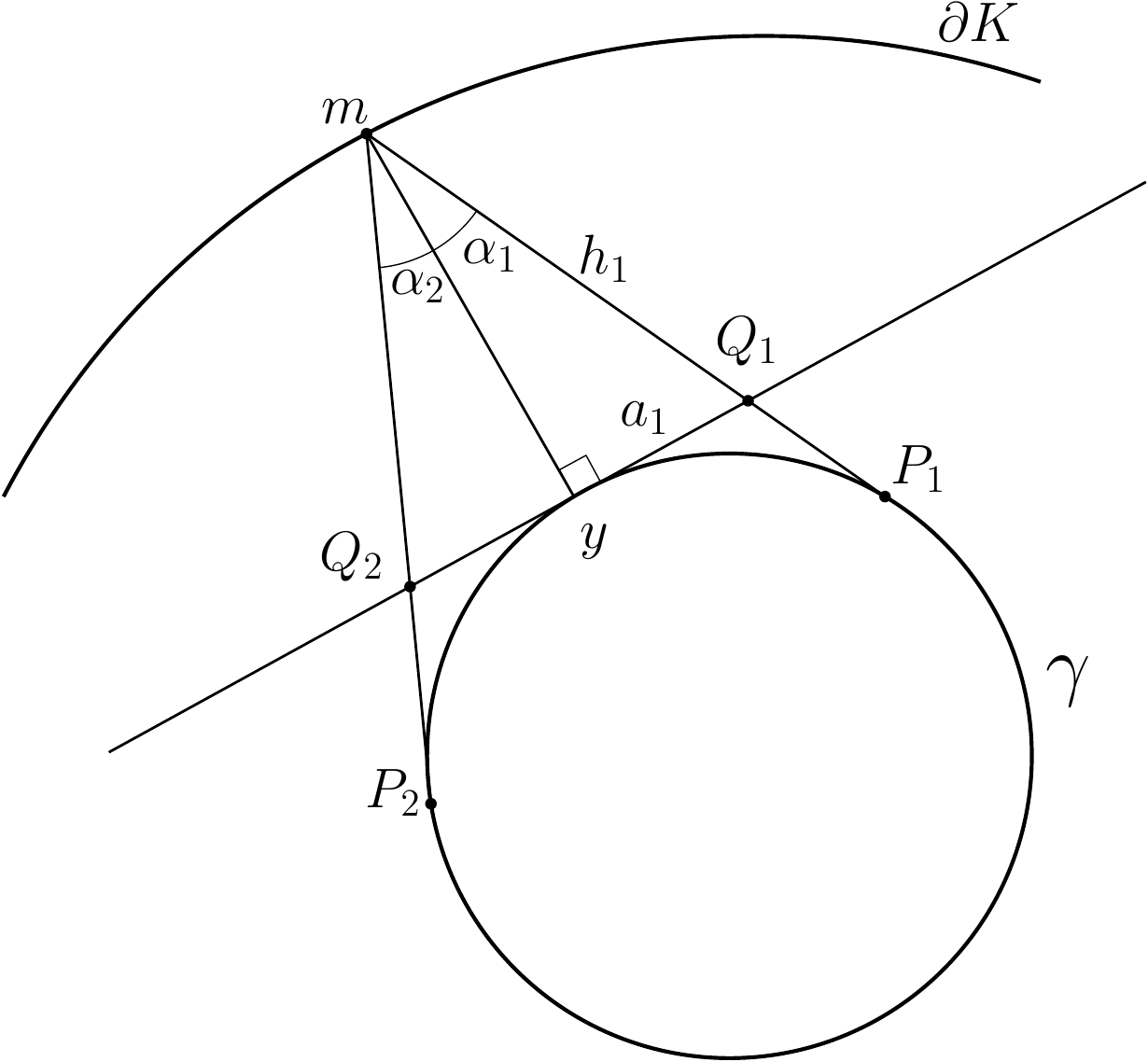}
		\caption{Notations for Lemma \ref{Prop_Low-Bd}.}
		\label{Fig_Low-Bound}
	\end{figure}
	
	Recall that it follows from the Crofton formula \cite[Section 3]{Santalo}
	that the perimeter of convex bodies in the hyperbolic plane is monotone with
	respect to inclusion, and thus:
	\begin{equation*}
		|P_1Q_1| + |Q_1Q_2| + |Q_2P_2| \ge |\wideparen{P_1yP_2}|.
	\end{equation*}
	Substituting this into the definition of $L$ gives
	\begin{eqnarray*}
		L &=& |mP_1| + |mP_2| - |\wideparen{P_1yP_2}| \ge
		|mP_1| + |mP_2| - |P_1Q_1| - |Q_1Q_2| - |Q_2P_2| = \\
		&=&|mQ_1| + |mQ_2| - |Q_1Q_2| =
		\left(|mQ_1| - |yQ_1|\right) +
		\left(|mQ_2| - |yQ_2|\right) = \\
		&=& \left(h_1-a_1\right) + \left(h_2-a_2\right),
	\end{eqnarray*}
	where 
	%
	$h_i=|mQ_i|,\, a_i=|yQ_i|$, and $\bdelta_{\gamma}=|my|$
	are the edge lengths of the triangles $\Delta myQ_i$. We denote the angles
	$\sphericalangle ymQ_i$ by $\alpha_i$ (note that $\alpha_i < \frac{\pi}{2}$) and
	define $\theta_i=\frac{\pi}{2} - \alpha_i$. Recall the hyperbolic laws of sine
	and cosine in a right triangle:
	\begin{eqnarray*}
		\cos(\theta_i) = \sin(\alpha_i) = \frac{\sinh(a_i)}{\sinh(h_i)} ,\\
		\sin(\theta_i) = \cos(\alpha_i) = \frac{\tanh({\bdelta_{\gamma}})}{\tanh(h_i)} .
	\end{eqnarray*}
	Note that there exists $z\in(a_1,h_1)$ such that
	$$\frac{\sinh(h_1)-\sinh(a_1)}{h_1-a_1}=\cosh(z) < \cosh(h_1),$$ and hence
	\begin{eqnarray*}
		h_1 - a_1 &>&
		\frac{\sinh(h_1)-\sinh(a_1)}{\cosh(h_1)} =
		\tanh(h_1)(1-\cos(\theta_1)) =\\ &=&
		\tanh({\bdelta_{\gamma}})\frac{1-\cos(\theta_1)}{\sin(\theta_1)} =
		\tanh({\bdelta_{\gamma}})\tan\left(\frac{\theta_1}{2}\right).
	\end{eqnarray*}
Without loss of generality we can assume that $\theta_1\le\theta_2$, hence
	\[
	L> \tanh({\bdelta_{\delta}})
	\left(
	\tan\left(\frac{\theta_1}{2}\right) + 
	\tan\left(\frac{\theta_2}{2}\right)
	\right) \ge
	2 \tanh({\bdelta_{\gamma}}) \tan\left(\frac{\theta_1}{2}\right),
	\]
	which we rewrite as
	\begin{equation}\label{Ineq_delta-gamma-1st}
		\tanh({\bdelta_{\gamma}})< \frac{L}{2\tan\left(\frac{\theta_1}{2}\right)}.
	\end{equation}
	On the other hand, since $h_1 < D$, 
	\begin{equation}\label{Ineq_delta-gamma-2nd}
		\tanh(\bdelta_{\gamma})=\sin(\theta_1)\tanh(h_1)<
		\frac
		{2\tan\left(\frac{\theta_1}{2}\right)}
		{1+\tan^2\left(\frac{\theta_1}{2}\right)}
		\tanh(D).
	\end{equation}
	Inequalities \eqref{Ineq_delta-gamma-1st} and
	\eqref{Ineq_delta-gamma-2nd} may be combined in order to remove the
	dependence on $\theta_1$. Define $$f(t):=\min\left\{\frac{L}{2t},\frac{2\tanh(D)t}{1+t^2}
	\right\}.$$ Note that for $s=\tan\left(\frac{\theta_1}{2}\right)
	\in[0,1]$ one has
	\[
	\tanh(\bdelta_{\gamma}) <
	f(s) \le
	\max_{0\leq t \leq 1}f(t).
	\]
	Since $f$ is the minimum of two functions, one increasing and one
	decreasing, two cases need to be considered in order to find $\max f$
	in $[0,1]$. Note first that if $\tanh(D) \leq {\frac L 2}$, then:
  $$\max_{0\le t \le 1}f(t)=\tanh(D)\le\frac{L}{2},$$
	and hence,
		\begin{equation*}\label{Ineq_L-Low-Bound-Case1}
			\frac{\tanh^2(\bdelta_{\gamma})}{\tanh(D)}
			<
			\tanh(D)
			\le
			\frac{L}{2}
			<
			L.
		\end{equation*}
On the other hand, if $\tanh(D) > {\frac L 2}$, then	$$\max_{0\leq t \leq 1}f(t)=\sqrt{L(\tanh(D)-L/4)}
		<\sqrt{L \tanh(D)}, $$
	 which again implies that
		\begin{equation*}\label{Ineq_L-Low-Bound-Case2}
			\frac{\tanh^2(\bdelta_{\gamma})}{\tanh(D)}
			<
			L,
		\end{equation*}
and the proof of the lemma is now complete.	
	\qed

\begin{rem}\label{rem_spherical-3.3} {\rm
In the spherical case, the analog of the statement in Lemma~\ref{Prop_Low-Bd} is:
\begin{equation}\label{eq_spherical-3.3}
\frac{4 \tan^2\left( \frac{\delta_\gamma}{2} \right)}{\tan (D)}\leq \frac{4 \tan^2\left( \frac{\bdelta_\gamma}{2} \right)}{\tan (D)} < L.
\end{equation}
Indeed, using the same notations as in Lemma~\ref{Prop_Low-Bd} (see figure \ref{Fig_Low-Bound}), one has:
\[
h_1 - a_1 
> \frac{\cos a_1 - \cos h_1} {\sin h_1} =
\frac{1}{\tan h_1} \left(\frac{\cos a_1}{\cos h_1} - 1 \right) \stackrel{(*)}{=}
\frac{\cos \alpha}{\tan \bdelta_\gamma} \left(\frac{1}{\cos \bdelta_\gamma} - 1 \right) =
\cos \alpha \tan \left( \frac{\bdelta_\gamma}{2} \right),
\]
where $(*)$ follows from the fact that $\tan \bdelta_\gamma = \sin\theta \tan h_1$, combined with the spherical Pythagoras Theorem $\cos h_1 = \cos \bdelta_\gamma \cos a_1$. Hence,
\[
\cos \alpha = \sin \theta < 
\frac {L} {2 \tan \left( \frac{\bdelta_\gamma}{2} \right)}.
\]
Finally, we conclude \eqref{eq_spherical-3.3} by combining the above inequalities, i.e., 
\[
\tan \left( \frac{\bdelta_\gamma}{2} \right) <
\frac12 \tan \bdelta_\gamma =
\frac12 \sin \theta \tan h_1 <
\frac12 \sin \theta \tan D <
 \frac {L \tan D} {4 \tan \left( \frac{\bdelta_\gamma}{2} \right)}.
\]
}
\end{rem}

\subsection{Proof of Lemma~\ref{Prop_Up-Bd}}\label{Sec_Upp-Bound}

In this section we provide an upper bound for the Lazutkin parameter $L$ of a convex caustic in terms of the diameter and the minimal curvature of the corresponding billiard table. 

%
%
	
	
	\noindent {\bf Proof of Lemma \ref{Prop_Up-Bd}.}
	Let $m \in \partial K$, and let $A$ and $B$ be the endpoints of
	the two tangents from $m$ to $\gamma$. Denote by $\theta$ the angle
	of reflection at $m$, and by $a,b,c$ the sides of the triangle $\Delta BmA$
	(see Figure \ref{fig-upper-bound}).
	Since $|AB| \le |\wideparen{AB}|$, we have $L \le |mA|+|mB|-|AB| =
	a + b - c$. The hyperbolic law of cosines in the triangle $\Delta BmA$ reads
	$$\cosh(c) = \cosh(a)\cosh(b) - \sinh(a)\sinh(b)\cos(\alpha).$$
	\begin{figure}[h]
		\centering
		\includegraphics[width=0.5\textwidth]{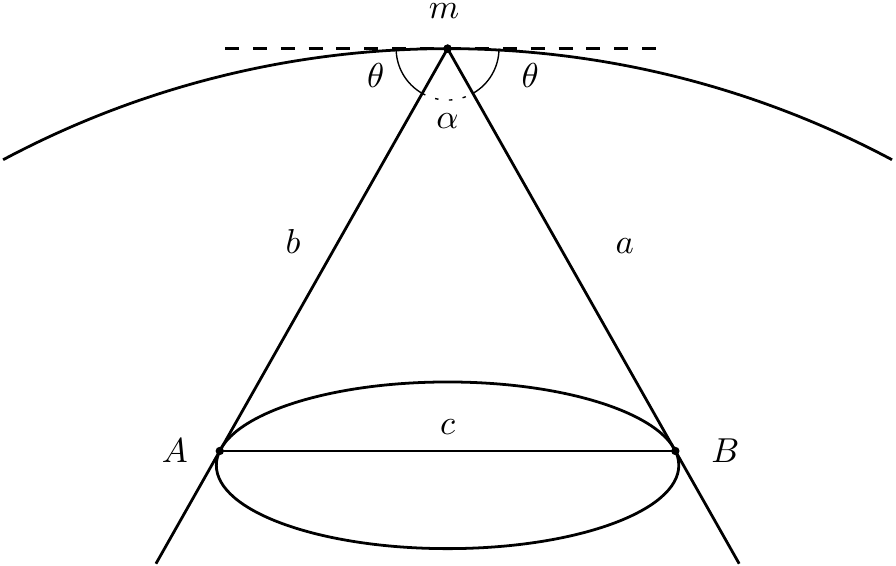}
		\caption{Notations for Lemma \ref{Prop_Up-Bd}.}
		\label{fig-upper-bound}
	\end{figure}

Thus by the  angle-sum formula for hyperbolic cosine one has
	%
	\begin{equation} \label{Eq_Hyp-Law-Cos}
		\cosh(a+b) - \cosh(c)  = 
		\sinh(a)\sinh(b) (1 + \cos \alpha ) =
		2 \sinh(a) \sinh(b) \sin^2 \theta.
		\end{equation}
On the other hand, note that, for $0<x<y$,
	\begin{equation}\label{Ineq_Jensen-lemma-for-cosh}
		\frac{\cosh y - \cosh x}{y - x} = {\frac {2 \sinh({\frac {y+x} {2} }) \sinh({\frac {y-x} {2} } ) } {y-x} }>
		\sinh\left(\frac{x + y}{2}\right)>
		\sinh\left(\frac{y}{2}\right).
	\end{equation}
Using \eqref{Eq_Hyp-Law-Cos}, \eqref{Ineq_Jensen-lemma-for-cosh} for $x = c < a + b = y$, and the hyperbolic mirror equation \eqref{eq-mirror} written in the form
	\begin{equation*}
		\sin(\theta) = 2\kappa(m) {\frac {\sinh(a)\sinh(b)} {\sinh(a+b)} },
	\end{equation*}
	one obtains
		%
	\begin{equation}\label{eq-up-last}
	\begin{aligned}
	L & \le a + b - c <
		\frac{\cosh(a+b) - \cosh c}{\sinh\left(\frac{a + b}{2}\right)} =
		\frac{2\sinh(a)\sinh(b)}{\sinh\left(\frac{a + b}{2}\right)}
		\sin^2(\theta) 
		 \\ & =
		\frac{2 \sinh (a)  \sinh (b)}{\sinh\left(\frac{a + b}{2}\right)}
		\left(2\kappa(m) \frac{\sinh(a)\sinh(b)}{\sinh(a+b)}\right)^2 
		%
		%
=		\frac{2 \kappa^2(m)}{\cosh^2 \left(\frac{a+b}{2}\right)}
		\left(
		\frac{\sinh(a)\sinh(b)}{\sinh\left(\frac{a+b}{2}\right)}
		\right)^3.
	\end{aligned}
	\end{equation}
	Since the function $\sinh:\R^+\to\R^+$ is log-concave, one has:
	$$
	\sinh \left(\frac{a+b}{2}\right) \ge
	\frac{\sinh(a)\sinh(b)}{\sinh\left(\frac{a+b}{2}\right)}.
	$$
	Plugging this into \eqref{eq-up-last}, we get:
	$$
	L < 2 \kappa^2(m) \,
	\frac{\sinh^3\left(\frac{a+b}{2}\right)}
	{\cosh^2\left(\frac{a+b}{2}\right)} \le
	2\kappa^2(m) \tanh^2(D) \sinh(D).
	$$
	Minimizing over $m \in \partial K$ yields the result.
	\qed

\begin{rem}\label{rem_spherical-3.4} {\rm
In the spherical case, the analog of the statement in Lemma~\ref{Prop_Up-Bd} is:
\begin{equation}\label{eq_spherical-3.4}
L < \pi \kappa_{\min}^2 \tan^3(D).
\end{equation}
Indeed, using the same notations as in the proof of Lemma~\ref{Prop_Up-Bd} (see Figure \ref{fig-upper-bound}),  by our assumption that  $\frac{c}{2}<\frac{a+b}{2} < D < \frac{\pi}{2}$, one has
\[
\sin\left(\frac{a+b+c}{2}\right) \ge \sin \left(\frac{a+b}{2}\right)\cos\left(\frac{c}{2}\right)>
\sin \left(\frac{a+b}{2}\right)\cos\left(\frac{D}{2}\right),
\]
and thus
\[
\frac{\cos(c) - \cos(a+b)}{a+b-c} =
\frac
{\sin \left(\frac{a+b-c}{2}\right) \sin \left(\frac{a+b+c}{2}\right)}
{\left(\frac{a+b-c}{2}\right)} >
\frac{2}{\pi} \sin \left(\frac{a+b}{2}\right)\cos\left(\frac{D}{2}\right).
\]
By the spherical law of cosine one has:
\[
\cos(c) - \cos(a+b) = 2 \sin(a) \sin(b) \sin^2(\theta).
\]
As in \eqref{eq-up-last}, by combining the above with the mirror equation \eqref{eq-mirror}, one concludes \eqref{eq_spherical-3.4} by
\begin{eqnarray*}
L &<& a + b - c <
\frac
{\cos(c) - \cos(a+b)}
{\frac{2}{\pi} \sin \left(\frac{a+b}{2}\right)\cos\left(\frac{D}{2}\right)} =
\pi \frac
{\sin(a) \sin(b) \sin^2(\theta)}
{\sin \left(\frac{a+b}{2}\right)\cos\left(\frac{D}{2}\right)} =
\frac
{4\pi \kappa_{\min}^2 \sin^3(a) \sin^3(b)}
{\sin \left(\frac{a+b}{2}\right) \sin^2(a+b) \cos\left(\frac{D}{2}\right)} 
\\ \\&\le&
\frac
{\pi \kappa_{\min}^2 \sin^6 \left(\frac{a+b}{2}\right)}
{\sin^3 \left(\frac{a+b}{2}\right) \cos^2 \left(\frac{a+b}{2}\right) \cos\left(D\right)} =
\frac
{\pi \kappa_{\min}^2 \tan^2 \left(\frac{a+b}{2}\right) \sin \left(\frac{a+b}{2}\right)}
{\cos\left(D\right)} < 
\pi \kappa_{\min}^2 \tan^3 \left( D \right).
\end{eqnarray*}
}
\end{rem}

\section{Caustic-free regions near the boundary} \label{sec-proof-of-hyp-Hubacher}

In this section we prove Theorem~\ref{Thm_Hubbacher_Hyperbolic}.
The proof follows the same lines as~\cite{Hu}. 
The geometry of the ambient space only plays a role in the proof of  Proposition~\ref{prop-the-big-one} (see Remark \ref{rem-spheric-fig-four} below). The other components in the proof of the two cases ($\mathbb H^2$ and $\mathbb S^2_{+}$) are essentially identical, thus throughout this section we restrict our attention to the hyperbolic case only.


Let $K \subset {\mathbb H}^2$ be a convex billiard table, with piecewise $C^2$-smooth boundary. 
Recall from Section~\ref{Sec_Prelim} above that the phase space of the billiard map is the cylinder $\Omega: =\R / l\Z \times [0,\pi]$ (where $l$ is the perimeter of the billiard table $K$). 
Here, by a slight abuse of notation, we use the coordinates $(s,t)$ both for the phase space and for its universal cover $\tOmega = \R \times [0,\pi]$ (where $s$ is the arclength parameter, and $t$ the angle to the tangent at $s$). A pair $(s_0, t_0)$ representing an impact is mapped to the pair $(s_1, t_1) = \phi(s_0, t_0)$ corresponding to the next impact point.
Recall additionally from Section \ref{Sec_Prelim} that the billiard map is an area-preserving monotone twist map.

We recall that an invariant circle $\Gamma$ is a curve in $\Omega$ that is homotopic to
one of the boundary components of $\Omega$, and such that $\phi (\Gamma) = \Gamma$.  
By Birkhoff's theorem (see, e.g., \cite{KatHas}), any invariant circle is a graph of a Lipschitz function $\R / l \Z \to [0,\pi]$, and moreover the Lipschitz constants of all such function are uniformly bounded.
Every convex caustic $\gamma$ in $K$ gives rise to an invariant circle of the billiard map, by considering the field of tangent vectors along $\partial K$ which point in the direction of positive tangency with $\gamma$. In particular, the boundary $\partial K$ corresponds to the trivial invariant circle $\R / l \Z \times \{0\} \subset \Omega$. 

To prove Theorem~\ref{Thm_Hubbacher_Hyperbolic} it suffices to prove that under its hypotheses there is a neighbourhood of the boundary circle $\R / l \Z \times \{0\}$ in the phase space $\Omega$ through which no other invariant circle can pass. The proof is divided into three parts. First, we show that the phase space contains a region of the form $I \times [0,\varepsilon)$ which is free of invariant circles, for some interval $I$ (see Proposition~\ref{prop-the-big-one}). Next, assuming the conclusion of Theorem~\ref{Thm_Hubbacher_Hyperbolic} is false, 
a standard limiting argument implies 
the existence of a non-trivial invariant circle which intersects the boundary circle $\R / l \Z  \times \{0\}$, and avoids the region $I \times [0,\varepsilon)$.  Finally,
we show that the existence of such an invariant circle is forbidden (Lemma~\ref{lem-step2}), and conclude that a caustic-free neighborhood of the boundary exists. The two main ingredients in the proof of Theorem~\ref{Thm_Hubbacher_Hyperbolic} are thus the following two claims.  

\begin{lem}\label{lem-step2}
Let $K \subset {\mathbb H}^2$ be a convex set with boundary $\partial K$ which is piecewise $C^2$-smooth. 
If $\Gamma$ is a non-trivial invariant circle then it is disjoint from the invariant circle $\partial K \times \{0\}$.
\end{lem}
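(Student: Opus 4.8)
The plan is to argue by contradiction, exploiting the monotone twist property of the billiard map near the trivial invariant circle $\partial K \times \{0\}$. Suppose $\Gamma$ is a non-trivial invariant circle that intersects $\partial K \times \{0\}$. By Birkhoff's theorem, $\Gamma$ is the graph of a Lipschitz function $t = u(s)$, $u : \R/l\Z \to [0,\pi]$. Since $\Gamma$ is non-trivial we have $u \not\equiv 0$, while the assumed intersection with the trivial circle gives a point $s_* $ with $u(s_*) = 0$. The set $Z = \{s : u(s) = 0\}$ is thus a nonempty proper closed subset of $\R/l\Z$; let $s_0$ be a boundary point of $Z$, so that $u(s_0) = 0$ but there are points $s$ arbitrarily close to $s_0$ with $u(s) > 0$.

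The heart of the matter is to see that the point $p_0 = (s_0, 0) \in \Gamma$ is mapped by $\phi$ to a point that cannot lie on $\Gamma$. First I would note that $(s,0)$ corresponds geometrically to a billiard trajectory that is tangent to $\partial K$ at the point with arclength $s$; under one reflection it returns to the boundary, and since the outgoing angle is $0$ as well, $\phi(s,0) = (\sigma(s), 0)$ where $\sigma$ is (on the universal cover) a continuous, strictly increasing map — indeed $\partial K \times \{0\}$ is an invariant circle and the twist condition \eqref{eq-mon-twist-cond} forces $\partial \sigma/\partial s > 0$ along it; moreover $\sigma(s) \neq s$ for all $s$ because a billiard chord tangent at both ends to a strictly convex boundary would have to degenerate (this is where one uses that $\partial K$ bounds a genuinely two-dimensional convex region, so no nontrivial geodesic chord lies in $\partial K$). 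Now pick $s_1$ close to $s_0$ with $u(s_1) = \tau > 0$ small; then $\phi(s_1, \tau)$ must again lie on $\Gamma$, so it has the form $(s_1', u(s_1'))$ with $u(s_1') \ge 0$. Using the twist condition, for fixed $s_1$ the map $t \mapsto $ (first coordinate of $\phi(s_1,t)$) is strictly increasing, so comparing $\phi(s_1, \tau)$ with $\phi(s_1, 0) = (\sigma(s_1), 0)$ we get $s_1' > \sigma(s_1)$. Letting $s_1 \to s_0$ through points of $Z^c$ and using continuity of $\phi$, $\sigma$, and $u$, one obtains in the limit a point $s_0' = \lim s_1' \ge \sigma(s_0)$ with $u(s_0') = 0$, i.e. $s_0' \in Z$; combined with the strict inequality and the fact that $\sigma$ has no fixed points, one shows that an entire one-sided neighborhood of $\sigma(s_0)$ — or rather the image under $\phi$ of a one-sided neighborhood of $p_0$ in $\Gamma$ — is pinned to height $0$, hence $\phi$ maps an arc of $\Gamma$ into $\partial K\times\{0\}$; since $\phi(\Gamma)=\Gamma$ this forces an arc of $\Gamma$ to already lie in $\partial K \times \{0\}$, and then propagating by $\phi^{\pm 1}$ and using connectedness of $\Gamma$ gives $\Gamma = \partial K \times \{0\}$, contradicting non-triviality.

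The main obstacle I anticipate is making the "pinning" step fully rigorous: turning the one-point intersection $u(s_0)=0$ together with the twist inequality into the conclusion that a whole arc of $\Gamma$ coincides with the trivial circle. The clean way to do this is to work on the universal cover $\tOmega$, use that both $\Gamma$ and $\partial K \times\{0\}$ lift to graphs of Lipschitz functions, and invoke the standard fact for monotone twist maps that two invariant circles which touch must coincide on a full $\phi$-invariant set — which, being closed, $\phi$-invariant, and containing a nonempty relatively open piece (by the twist-driven strict-monotonicity comparison above), must be all of $\Gamma$. This is exactly the Birkhoff-type argument used in the Euclidean setting in~\cite{Hu}, and the only place curvature enters is the elementary geometric fact that a chord of a strictly convex curve cannot be tangent to the curve at both endpoints, which holds verbatim in $\H^2$ and $\S^2_+$.
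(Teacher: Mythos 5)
Your proposal contains a genuine error at its foundation, and the step that would carry the whole argument is deferred to an unproven ``standard fact'' that is essentially the lemma itself. First, the error: you assert that $\phi(s,0)=(\sigma(s),0)$ for a strictly increasing circle map $\sigma$ with $\sigma(s)\neq s$ for all $s$, arguing that a chord tangent to a strictly convex boundary at both endpoints ``would have to degenerate''. It does degenerate --- to a point --- and that is precisely why the continuous extension of the billiard map to the boundary circle $t=0$ is the \emph{identity}: as $t_0\to 0^+$ the chord shrinks and $s_1\to s_0$, so $\sigma=\mathrm{id}$. (The paper uses this fact explicitly.) Your ``pinning'' step invokes ``the fact that $\sigma$ has no fixed points'', so it collapses once $\sigma=\mathrm{id}$; with the correct $\sigma$, the twist comparison only gives $s_1'>s_1$, and letting $s_1\to s_0$ yields $s_0'=s_0$ by continuity, which produces no contradiction and no arc of $\Gamma$ pinned to height $0$. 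Second, the closing appeal to ``two invariant circles which touch must coincide'' is circular here: with one of the two circles being $\partial K\times\{0\}$, that statement \emph{is} Lemma~\ref{lem-step2}, and your argument does not supply a proof of it.

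For comparison, the paper's proof is a short global argument: if $\Gamma$ meets but does not coincide with $\partial K\times\{0\}$, the two circles bound an open disk-like region $W$; since $\phi$ is the identity on $t=0$ and $\phi(\Gamma)=\Gamma$, the bounding arcs and their common endpoints are preserved, so $\phi(W)=W$. Cutting $W$ by a vertical line $\ell$ and using the twist condition \eqref{eq-mon-twist-cond} (the image of $\ell$ bends to the right while its foot on $t=0$ stays put) gives $\phi(W_R)\subsetneqq W_R$, contradicting area preservation. Any repair of your local argument would in effect have to rebuild this area-plus-twist mechanism; a pointwise comparison near a single touching point is not enough.
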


\begin{prop} \label{prop-the-big-one}
Let $K \subset {\mathbb H}^2$ be a convex set with piecewise $C^2$-smooth boundary.
Assume that the curvature  of the boundary
has a jump discontinuity point $p \in \partial K$, where  
the one sided limits of the curvature are positive. 
Then there is an open neighborhood in the phase space $\Omega$
of the form $I \times [0,\varepsilon)$  that no invariant circle intersects,
where $I$ is some interval and $\varepsilon >0$.
\end{prop}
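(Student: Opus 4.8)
The plan is to carry over Hubacher's argument \cite{Hu}, the only genuinely new input being the near-boundary asymptotics of the billiard map in the constant-curvature metric; as the text indicates, this is the only point at which the ambient geometry enters the proof (cf. Remark~\ref{rem-spheric-fig-four}). Write $s_p$ for the arclength coordinate of $p$, and let $\kappa_-\neq\kappa_+$ be the one-sided limits of the curvature at $p$, both positive by hypothesis, with $\kappa$ bounded below near $p$ by some $\kappa_0>0$. Fix $\delta>0$ so small that $\partial K$ is $C^2$ on each of the arcs $(s_p-\delta,s_p)$ and $(s_p,s_p+\delta)$ with the oscillation of $\kappa$ on each of them much smaller than $|\kappa_+-\kappa_-|$. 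By Birkhoff's theorem every invariant circle is the graph $\{t=\tau(s)\}$ of a function $\tau$ that is Lipschitz with a uniform constant $\Lambda$; hence if $\tau(s_0)<\varepsilon$ for some $s_0$ in an interval $I\ni s_p$, then $\tau<\varepsilon+\Lambda|I|$ throughout $I$. It therefore suffices to produce a fixed interval $I\ni s_p$ and an $\varepsilon'>0$ such that no invariant circle satisfies $\tau<\varepsilon'$ on all of $I$; shrinking $I$ and $\varepsilon$ afterwards then yields a neighbourhood of the desired form $I\times[0,\varepsilon)$ missed by every invariant circle.

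Assume for contradiction that an invariant circle $\Gamma=\{t=\tau(s)\}$ exists with $\tau<\varepsilon'$ on $I$, where $\varepsilon'$ will be chosen small. The first ingredient is a local description of the billiard map $\phi$ on near-grazing points $(s,t)$ (that is, $t$ small) with $s$ near $s_p$. On each $C^2$ arc, the hyperbolic (resp. spherical) law of cosines together with the mirror equation~\eqref{eq-mirror} yield the standard near-grazing asymptotics: the arclength increment $S(s)-s$ between consecutive reflections is of order $t$, with a proportionality factor depending on the local curvature, and the reflection angle changes per step by at most $O\!\big(t\cdot\operatorname{osc}\kappa\big)+O(t^2)$; in particular, over an arc of constant curvature the angle is preserved along the orbit. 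These asymptotics agree with their Euclidean counterparts to the order that matters, the curved metric contributing only at higher order in $t$. Consequently the near-boundary dynamics in a one-sided neighbourhood of $p$ is modelled, to leading order, on curvature $\kappa_-$ on the left and curvature $\kappa_+$ on the right, whereas across the chord straddling $p$ the two tangent directions coincide in the limit because $\partial K$ is $C^1$ at $p$; the mismatch $\kappa_-\neq\kappa_+$ is thus a genuine, non-removable defect of the dynamics localized at $(s_p,0)$.

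The contradiction is then obtained exactly as in \cite{Hu}. Using this defect, one constructs for every sufficiently small angle a short billiard orbit segment near $p$ — Hubacher's ``figure-four'' (cf. Remark~\ref{rem-spheric-fig-four}) — one half of which is governed by $\kappa_-$ and the other by $\kappa_+$, and whose geometry forces it to traverse the band $I\times[0,\varepsilon')$ and return in a manner a $\phi$-invariant Lipschitz graph cannot accommodate; concretely, it produces two points of a single billiard orbit lying on opposite sides of the hypothetical $\Gamma$, which is impossible since $\Gamma$ is $\phi$-invariant and separates the phase cylinder. This rules out $\Gamma$, and by the reduction of the first paragraph the proposition follows. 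The hypothesis that both one-sided curvatures are positive is used precisely to keep near-boundary orbits under control — each step has length comparable to $t/\kappa_\pm>0$ — so that the figure-four construction is meaningful; the degenerate case $\kappa_{\min}=0$ is the distinct phenomenon of Mather's theorem.

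I expect the main obstacle to lie entirely in the local analysis at the corner $p$: first, establishing the near-boundary expansion of $\phi$ to the precise order at which the $\kappa_-\neq\kappa_+$ defect becomes visible, and checking that passing from the Euclidean metric to that of $\H^2$ (or $\S^2_+$) perturbs it only at higher order — essentially a careful unwinding of \eqref{eq-mirror} and the relevant trigonometric laws; and second, assembling Hubacher's figure-four configuration together with the attendant crossing argument carefully enough that an entire band $I\times[0,\varepsilon)$ — not merely the orbits through the single point $(s_p,0)$ — is certified to contain no invariant circle.
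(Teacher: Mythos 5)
Your overall strategy --- carry over Hubacher's argument, the only new input being the near-boundary asymptotics of the billiard map in constant curvature --- is indeed the one the paper takes, but the two steps you yourself flag as ``the main obstacle'' are precisely the content of the proof, and as written your proposal leaves both as gaps. First, the corner defect is never quantified. Everything rests on the precise relation between the reflection angles on the two sides of a chord straddling $p$: if $t_0$ is the angle at $s_0<0$ and $t_1$ the angle at the next impact $s_1>0$, then $t_1=\sqrt{\kappa_1/\kappa_0}\,t_0+\overline{o}(t_0)$ (cf.\ \eqref{eq:alpha_taylor}), so that $t_1<\mu t_0$ for a fixed $\mu<1$. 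The paper derives this by comparing $\partial K$ near $p$ with its two one-sided osculating curves of constant curvature (circles, horocycles, or equidistant curves in $\H^2$) and expanding $f_\kappa(h)=\arccos\bigl(\cosh(h)-\kappa\sinh(h)\bigr)=\sqrt{2\kappa h}+\overline{o}(\sqrt{h})$. Your proposal asserts only that the mismatch of one-sided curvatures is ``a genuine, non-removable defect,'' which is the conclusion rather than an argument; and your claim that the curved metric ``perturbs only at higher order'' is exactly what has to be checked --- it is true, but via this hyperbolic (resp.\ spherical) computation, not by analogy with the Euclidean case.

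Second, the mechanism of the final contradiction is misdescribed. There is no ``figure-four'' configuration in Hubacher's proof or in this paper; you appear to have read the label of Remark~\ref{rem-spheric-fig-four} (which points to the figure of the osculating curves, Figure~\ref{fig:osculating}) as the name of a construction. The actual argument does not produce a single orbit with points on both sides of $\Gamma$; it produces \emph{two orbits both lying on} $\Gamma$ which are forced to cross within a bounded number $n$ of reflections, contradicting the order-preservation of $\phi|_\Gamma$ (Lemma~\ref{lem-no-crossing}). To guarantee an orbit of $\Gamma$ that actually straddles the corner one needs the auxiliary curve $\tau(s)$ (the locus of phase points whose chord is orthogonal to the normal at $p$): one takes an orbit through $\Gamma\cap\{t=\tau(s)\}$ and another through $(0,\Gamma(0))$, and the angle drop $t_1<\mu t_0$ combined with the one-sided expansions $s_{k+1}=s_k+\tfrac{2}{\kappa(s_k)}\,t_k+\overline{o}(t_k)$ of \eqref{eq-taylor} forces a crossing after $n$ steps, where $n$ is chosen in advance from the curvature bounds on either side of $p$. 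None of this bookkeeping is present in your proposal, and the contradiction you do describe (a single orbit crossing an invariant circle) does not follow from the defect without substantial further work. So the outline is right, but the proof is not yet there.
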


The proofs of Lemma~\ref{lem-step2} and Proposition~\ref{prop-the-big-one} appear after the proof of  Theorem~\ref{Thm_Hubbacher_Hyperbolic}. 

\noindent {\bf Proof of Theorem~\ref{Thm_Hubbacher_Hyperbolic}.}
Suppose, on the contrary, that every neighbourhood of $\partial K \times \{ 0 \}$ intersects some invariant circle. Thus we obtain a sequence of invariant circles $\Gamma_n$, whose distances to the boundary $\partial K \times \{ 0 \}$ are arbitrarily small, i.e.,
\begin{equation*}\label{eq-seq-touching-bd}
\mbox{dist}
\left(\partial K \times \{0\},\, \Gamma_n \right)
\to 0.
\end{equation*}
Using Bihkhoff's theorem mentioned above, these invariant circles correspond to a sequence of Lipshits continuous functions $f_n: \partial K \to [0,\pi]$ which all have the same Lipschitz constant.
By the Arzel\`a-Ascoli Theorem, we may assume, possibly passing to a subsequence, that $\{f_n\}$
converges uniformly to a function $f$. It is easy to check that the graph of $f$ is an invariant circle, which we denote by $\Gamma$.
On one hand, since $\{\Gamma_n\}$ approaches $\partial K \times \{ 0 \}$, the invariant circle $\Gamma$ must intersect $\partial K \times \{ 0 \}$. On the other hand, the circle $\Gamma_n$ do not intersect $I \times [0, \varepsilon]$, for the interval $I$ obtained in Proposition~\ref{prop-the-big-one} above, and thus $\Gamma$ does not coincide with $\partial K \times \{ 0 \}$. This is prohibited by Lemma~\ref{lem-step2}, which completes the proof of the theorem.
\qed


\noindent {\bf Proof of Lemma~\ref{lem-step2}.}
Suppose, by contradiction, that an invariant circle
$\Gamma$ intersects, but does not coincide with, $\partial K \times \{0\}$. Note that
$\Gamma$ encloses some open set $W$ that is homeomorphic to a
disk (see Figure \ref{fig:disjoint}). Since the billiard map $\phi$ is the identity on
$\partial K \times \{0\}$, $W$ is invariant under $\phi$. A
vertical line $\ell$ passing through $W$ divides it into two open
sets, $W_L$ and $W_R$, (to the left and right of $\ell$ respectively).
The monotone twist condition (see~\eqref{eq-mon-twist-cond} above) implies that the image of $\ell$ under
$\phi$ `bends to the right'. This means that
$\phi(W_R) \subsetneqq W_R$, and in particular $\phi(W_R)$ has
smaller area than $W_R$, which contradicts the area preserving
property of $\phi$.
\qed

\begin{figure}[h]
    \centering
    \includegraphics{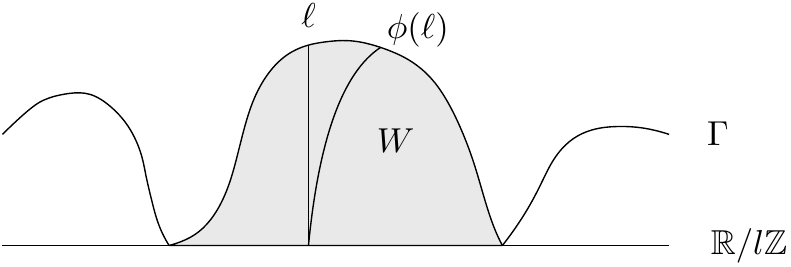}
    \caption{An invariant circle intersecting the boundary curve}
    \label{fig:disjoint}
\end{figure}

In what follows, we use the well known ``no crossing'' property of invariant circles of monotone twist maps.

\begin{lem}\label{lem-no-crossing}
Let $\Gamma \subset {\mathbb R}/l{\mathbb Z} \times [0,\pi] $ be an invariant circle of a monotone twist map as above, and let
$O  = \left\{ ( s_k, t_k) \right\}_{k\in\Z}\,$ and
$O' = \left\{ (s'_k,t'_k) \right\}_{k\in\Z}$ be two orbits lying on $\Gamma$. Then $O$ and $O'$ cannot cross, i.e. for all $k \in {\mathbb Z}$:
\[
s'_0\in (s_0,s_1)
\quad \Rightarrow \quad
s'_k\in (s_k,s_{k+1}).
\]
\end{lem}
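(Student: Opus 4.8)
\textbf{Proof proposal for Lemma~\ref{lem-no-crossing}.}

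The plan is to derive the no-crossing property directly from the monotone twist condition~\eqref{eq-mon-twist-cond} together with the graph property of invariant circles guaranteed by Birkhoff's theorem. The key point is that once two orbits both lie on the same invariant circle $\Gamma$, their $s$-coordinates already determine their $t$-coordinates (because $\Gamma$ is a graph $t = f(s)$ of a Lipschitz function), so tracking the orbits reduces to tracking points on the circle $\R/l\Z$, and the billiard map restricted to $\Gamma$ induces an orientation-preserving circle homeomorphism $\bar\phi$.

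The first step is to make this reduction precise: writing $\Gamma = \{(s, f(s))\}$, the map $\bar\phi : s \mapsto \pi_1(\phi(s, f(s)))$ is a well-defined continuous map of $\R/l\Z$ to itself, and one checks it is a homeomorphism (its inverse is induced by $\phi^{-1}$, which also preserves $\Gamma$). The second step is to show $\bar\phi$ is orientation-preserving, i.e. that its lift $\widetilde{\phi}$ to $\R$ is strictly increasing. This is exactly where the twist condition enters: if $s_0' > s_0$ then, comparing the images $s_1 = \widetilde\phi(s_0)$ and $s_1' = \widetilde\phi(s_0')$, the monotonicity $\partial s_1 / \partial t_0 > 0$ together with the Lipschitz graph property (a uniform Lipschitz bound on $f$, again from Birkhoff) forces $s_1' > s_1$; more carefully, along the segment of $\Gamma$ from $(s_0,f(s_0))$ to $(s_0',f(s_0'))$ one integrates $ds_1 = (\partial s_1/\partial s_0)\,ds_0 + (\partial s_1/\partial t_0)\,df$, and a standard argument for monotone twist maps (see e.g. Chapter~\textsection 1 of~\cite{Sib}) shows this integral is positive. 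Hence $\widetilde\phi$ is a strictly increasing homeomorphism of $\R$ commuting with the deck transformation $s \mapsto s+l$.

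The third step is the purely order-theoretic conclusion. Since $\widetilde\phi$ is strictly increasing, it preserves the cyclic order on $\R/l\Z$: if $s_0' \in (s_0, s_1)$, then applying $\widetilde\phi$ gives $s_1' = \widetilde\phi(s_0') \in (\widetilde\phi(s_0), \widetilde\phi(s_1)) = (s_1, s_2)$, and iterating (or applying $\widetilde\phi^{-1}$ for negative indices) yields $s_k' \in (s_k, s_{k+1})$ for all $k \in \Z$, which is the claim.

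I expect the main obstacle to be the careful justification in the second step that $\widetilde\phi$ is genuinely \emph{strictly} increasing rather than merely non-decreasing — this requires combining the strict inequality in the twist condition with the Lipschitz (not merely continuous) nature of $f$ to control the competing term $(\partial s_1/\partial s_0)\,ds_0$, and is the one place where one must invoke Birkhoff's theorem in its quantitative form (uniform Lipschitz bounds). Everything else is formal: the reduction to a circle map and the order-preservation argument are routine once strict monotonicity is in hand.
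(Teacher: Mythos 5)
Your proposal is correct and follows essentially the same route as the paper: both reduce the lemma to the observation that $\phi|_\Gamma$ induces a circle homeomorphism whose lift to $\R$ is (strictly) monotone, and then obtain $s'_k\in(s_k,s_{k+1})$ for all $k$ by iterating this monotone lift. The paper's proof is in fact terser than yours --- it simply asserts that the lift $f_\Gamma$ is a bijective monotone function and concludes from $s_k=f_\Gamma^k(s_0)$ and $s'_k=f_\Gamma^k(s'_0)$ --- so your second step (deriving strict monotonicity from the twist condition and the Lipschitz graph property) is detail the paper leaves implicit.
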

\noindent {\bf Proof of Lemma~\ref{lem-no-crossing}.}
Since $\phi|_\Gamma$ corresponds to a homeomorphism of $S^1$, its lift $f_\Gamma :\R \to \R$ is a bijective monotone function.
The proof follows from the fact that $s'_k = f_\Gamma^k (s'_0)$, and $s_k = f_\Gamma^k (s_0)$.\qed

\noindent {\bf Proof of Proposition~\ref{prop-the-big-one}.}
{Denote by $\kappa_0$ and $\kappa_1$ the (positive) one-sided limits of the curvature  at $p \in \partial K$ (from the left and the right, respectively), and assume without loss of generality that $\kappa_1<\kappa_0$. Set the arclength parameter $s\in\R$ such that at the point $p \in \partial K$ one has $s=0$.
}
Consider the function $\tau(s)$ defined on (a subset of) the boundary $\partial K$ as follows. The value $\tau(s) \in [0,\pi]$ is the unique angle such that the line corresponding to $(s, \tau(s))$  is orthogonal to the normal line at $p$ (see Figure \ref{fig-Delta}). 
	\begin{figure}[h]
	\centering
	\includegraphics[scale=0.8]{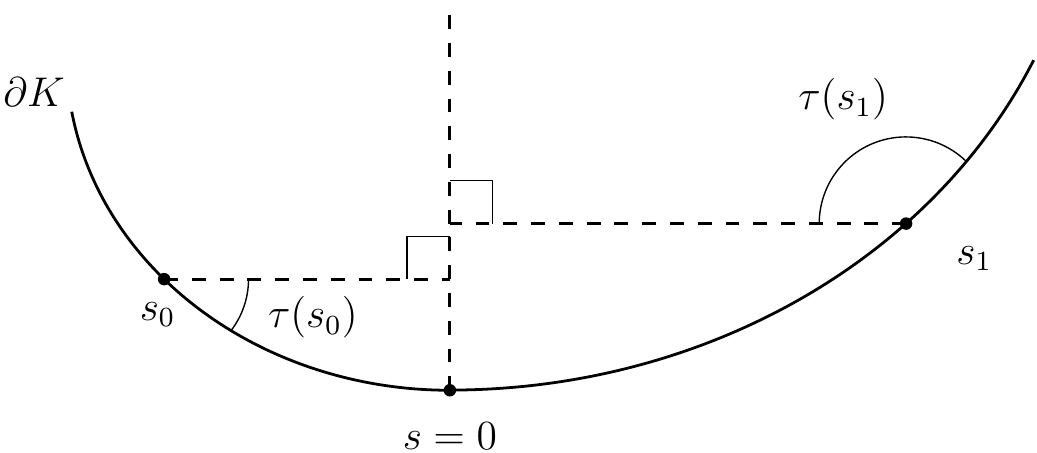}
	\caption{The function $\tau$}\label{fig-Delta}
\end{figure}

Note that $\tau(s)$ is well defined near the point $p$ (where $s=0$). Moreover, it is strictly decreasing as $s\to0^-$, and $\lim\limits_{s \to 0^-}\tau(s)=0$, as follows, e.g., from the Gauss-Bonnet formula (for $s_1 < s_2 \le 0$):
%
\begin{equation*}
\tau(s_1)-\tau(s_2) =  \int_{s_1}^{s_2} \kappa(\sigma) d\sigma
- (s_2-s_1)\overline o(s_1).
\end{equation*}

Next, consider a point $(s_0, t_0)$ with $t_0=\tau (s_0)$, and $s_0<0$. Then, for $(s_1,t_1) =\phi(s_0,t_0) $ one has $s_0 < 0 < s_1$.
We first show that if $(s_0,t_0)$ is chosen inside a sufficiently small neighborhood $U \subset \Omega$ containing $(0,0)$, then one has
$t_1 < (1-2\delta) t_0 < t_0$,
for some $\delta\in \left(0,\frac12 \right)$. That is
\begin{equation} \label{eq-first-req-from-U}
	 t_0 - t_1 > 2\delta t_0.
\end{equation}
Indeed, consider the left-sided and right-sided osculating \emph{curves of constant curvature} to $\partial K$ at the point $s=0$, with constant curvatures $\kappa_0$ and $\kappa_1$, respectively. We recall (see \cite{GR}) that in the hyperbolic plane there are three types of curves of constant positive curvature: hyperbolic circles (with $\kappa > 1$), horocycles (with $\kappa = 1$), and equidistant curves, i.e. curves lying at a fixed distance from a given geodesic (with $0 < \kappa < 1$). Denote the angles that they make with the chord from $s_0$ to $s_1$ by $\alpha_0$ and $\alpha_1$, respectively, and the distance of that chord from the point $s=0$ by $h$ (see Figure \ref{fig:osculating}). 
\begin{figure}[h]
    \centering
    \includegraphics[scale=0.8]{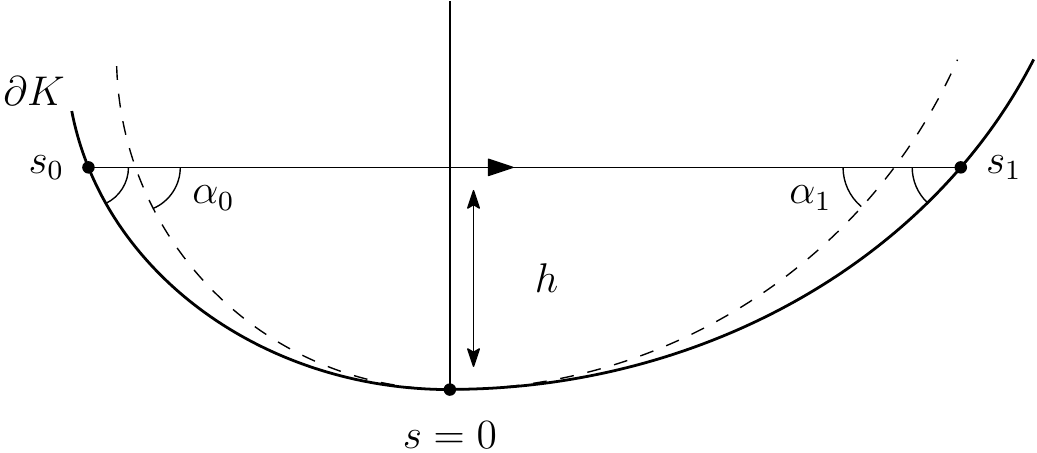}
    \caption{The osculating curves}
    \label{fig:osculating}
\end{figure}
A simple hyperbolic geometry exercise shows that these are related by 
\begin{equation}\label{eq:alpha_h}
\alpha_j = f_{\kappa_j} (h), \qquad (j=0,1)
\end{equation}
where the function $f_\kappa$ is defined by 
\begin{equation*}
f_\kappa(h) = \arccos{\bigl( \cosh(h) - \kappa \sinh(h) \bigr)}.
\end{equation*}
For example, \eqref{eq:alpha_h} is illustrated in Figure \ref{fig:hyperbolic_exercise} for the case of a hyperbolic circle of radius $R$ and hence curvature $\kappa = \coth(R)$. 
\begin{figure}[h]
    \centering
    \includegraphics[width=0.55\linewidth]{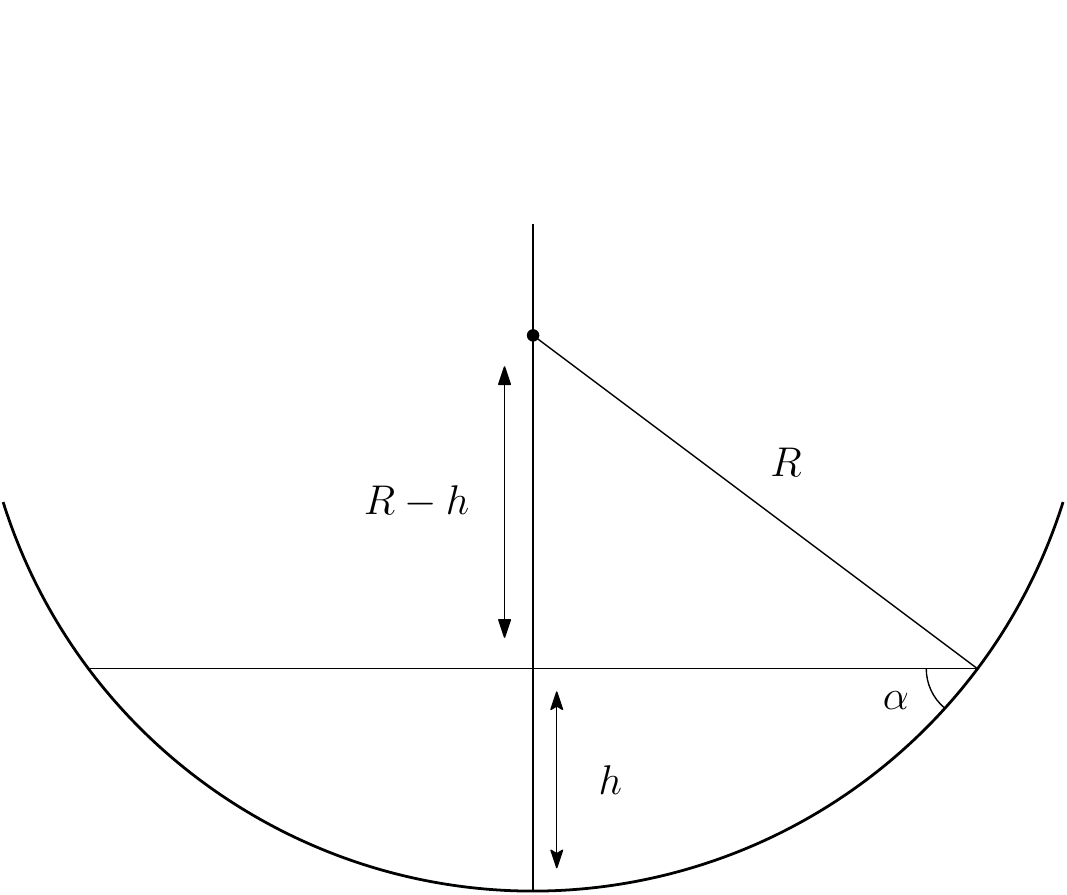}
    \caption{For a circle, the relation $\alpha = f_\kappa(h)$ follows from the hyperbolic law of sines: $\cos \alpha = \frac{\sinh(R-h)}{\sinh (R)}$}
    \label{fig:hyperbolic_exercise}
\end{figure}
Finally, using the expansion
\begin{equation*}
\arccos(1-x) = \sqrt{2x} + \overline{o}(\sqrt{x})
\end{equation*}
we deduce that 
\begin{equation}\label{eq:f_k_exp}
f_\kappa(h) = \sqrt{2\kappa h} + \overline{o}(\sqrt{h}).
\end{equation}
From \eqref{eq:alpha_h} and \eqref{eq:f_k_exp} it follows easily that
\begin{equation}\label{eq:alpha_taylor}
\alpha_1 = \sqrt{\frac{\kappa_1}{\kappa_0}} \,\alpha_0 + \overline{o}(\alpha_0).
\end{equation}


Using the second-order approximation of the boundary $\partial K$ by the osculating curves we deduce from \eqref{eq:alpha_taylor} that 
$$t_1 = \sqrt{\frac{\kappa_1}{\kappa_0}}\,t_0 + \overline{o}(t_0).$$
Thus, by choosing the neighborhood $U$ to be sufficiently small and using the assumption $\kappa_1 < \kappa_0$, we conclude that $t_1 < \mu t_0$ for some $\mu \in (0,1)$, and inequality \eqref{eq-first-req-from-U} follows, for $\delta = \frac12 (1-\mu)$.

Next, we shrink $U$ if necessary, so that on both $U_{+}:=U \cap \{s > 0\}$ and $U_{-}:=U \cap \{s < 0\}$ one has the following approximation for the billiard  map (which follows, e.g., from~\cite[Lemma 8]{CP}, combined with a limiting argument when $t\to 0^+$):
\begin{equation}\label{eq-taylor}
\phi(s,t)
=
\begin{pmatrix}
1 & \frac{2}{\kappa(s)}\\
0 &       1    
\end{pmatrix}
\begin{pmatrix}
s\\
t
\end{pmatrix}
+ \overline o(t),
\end{equation}
whenever $(s,t)$ and $\phi(s,t)$ are either both in $U_{+}$ or both in $U_{-}$. Finally, since the curvature is continuous from either side of the point $p$, we may further shrink $U$ so that the bounds $m_-, m_+, M_-, M_+$ defined by
\[
M_- = \sup_{U \cap \{s < 0\}} \left\{\frac{2}{\kappa(s)}\right\}, \quad
m_- = \inf_{U \cap \{s < 0\}} \left\{\frac{2}{\kappa(s)}\right\},
\]
\[
M_+ = \sup_{U \cap \{s > 0\}} \left\{\frac{2}{\kappa(s)}\right\}, \quad
m_+ = \inf_{U \cap \{s > 0\}} \left\{\frac{2}{\kappa(s)}\right\},
\]
satisfy 
$\frac{m_-}{M_-}, \frac{m_+}{M_+} \in
\left(\frac{1}{1+\delta}, 1 \right)$, which ensures that
\begin{equation*}
(1-\delta)M_-  < m_-
\end{equation*}
\begin{equation*}
 M_+ < m_+(1+\delta).
\end{equation*}
Having chosen the neighborhood $U$ as above, we next choose a sufficiently small rectangular neighborhood  $\widetilde U \subset U$ containing $(0,0)$, in a way which guarantees that starting at $(s,t) \in \widetilde U$, the billiard trajectory (both forward and backward) remains inside $U$ for $n$ consecutive reflections, where
 $n\in \N$ is defined by $$n = \max
        \left\{
        \Bigl\lceil \frac{m_-}{m_--(1-\delta)M_-} \Bigr\rceil,
        \Bigl\lceil \frac{(1+\delta)m_+}{m_+(1+\delta) -M_+} \Bigr\rceil
        \right\}.$$
Note that this choice of $n$ implies that:
\begin{equation}\label{eq-cond-nmM-case1}
    n(1-\delta)M_- \le (n-1)m_-,
\end{equation}
\begin{equation}\label{eq-cond-nmM-case2}
    nM_+ \le (n-1)m_+(1+\delta).
\end{equation}
Consider the intersection point $(a,\tau(a))$ of the graph of $\tau$ with the boundary $\partial \widetilde U$, where $a<0$. 
Let $b\in(a,0)$, and let 
 $V = [a,b] \times [0, \tau(b)]$ be a rectangle inside $\widetilde U$ under the graph of $\tau$ (see Figure \ref{fig:V}).
 \begin{figure}[h]
     \centering
     \includegraphics[width=.40
     \textwidth]{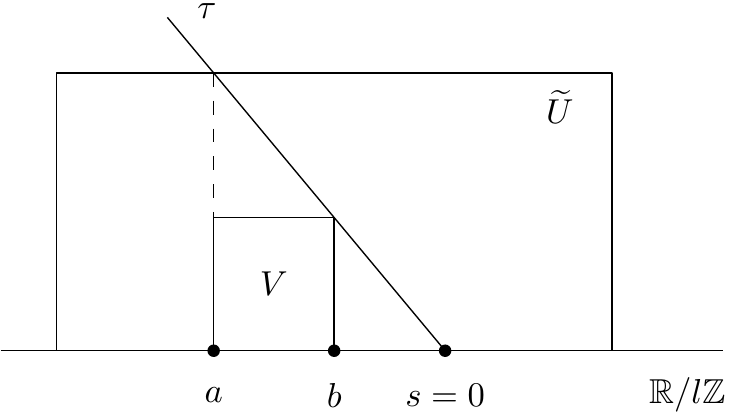}
     \caption{The neighbourhood $V$}
     \label{fig:V}
 \end{figure}
 
Since an invariant circle $\Gamma$ is a Lipschitz curve, and by Birkhoff's theorem one has a uniform bound on the Lipschitz constant of any such circle, the neighborhood $V$ can be further shrunk to a rectangle $I\times [0,\varepsilon]$ so that if $\Gamma$ intersects $V$, then $(0,\Gamma(0))$ lies in $\widetilde U$.
We will show that no invariant circle intersects $V = I \times [0,\varepsilon]$.
Assume by contradiction that $\Gamma$ is an invariant circle passing through $V$. 
Note that, by the specific choice of $V$, the curves $\Gamma$ and $\tau$ intersect at a point $(s_0,t_0) \in \widetilde U$, and $(s'_0,t'_0) := (0,\Gamma(0)) \in \widetilde{U}$ as well. 
The main idea of the proof is to show that the jump in the curvature implies that the two orbits 
$O  = \left\{ ( s_k, t_k) \right\}_{k\in\Z}$, and
$O' = \left\{ (s'_k,t'_k) \right\}_{k\in\Z}$ lying on the invariant curve $\Gamma$ must cross, in contradiction to the monotonicity of $\phi|_\Gamma$, stated in Lemma \ref{lem-no-crossing} above.



Note that, by~\eqref{eq-first-req-from-U}, one has $(1+\delta)t_1 < (1-\delta)t_0$, which implies that either $t_0' < (1-\delta) t_0$,\, or $\,(1+\delta) t_1 < t_0'$. We consider these two cases separately, and exhibit  in each of them, a forbidden crossing (within $n$ reflections),  thus reaching the desired contradiction.
More precisely, since $0=s'_0 \in (s_0, s_1)$, one has, by Lemma \ref{lem-no-crossing}, that for all $k\in\mathbb{Z}$
\begin{equation}\label{eq-no-crossing-cond}
s'_k \in (s_k, s_{k+1}).
\end{equation}

\noindent {\bf Case 1.} Assume $t'_0 < (1-\delta) t_0$.
In this case we will obtain a crossing for a negative index, that is
$s'_{-n}\notin (s_{-n},s_{-n+1})$. 
Recall that the points $\left\{ ( s_k, t_k) \right\}_{k = -n}^{k=0},\,
\left\{ (s'_k,t'_k) \right\}_{k = -n}^{k=0}$ all remain inside
$U_-$. By~\eqref{eq-taylor}, one has
\[
\begin{array}{ccc}
s_{k+1} = & s_k ~~+ & \frac{2}{\kappa(s_k)}~t_k + \overline{o}(t_k),\\
t_{k+1} = &       &                  ~~~~~~~t_k + \overline{o}(t_k),
\end{array}
\]
for $k\in \{-n+1,\dots,-1 \}$. Since $n$ is fixed, we may equivalently write this as
\[
\begin{array}{ccc}
s_{k+1} = & s_k ~~+ & \frac{2}{\kappa(s_k)}~t_0 + \overline{o}(t_0),\\
t_{k+1} = &         &                ~~~~~~~~t_0 + \overline{o}(t_0).
\end{array}
\]
Similarly, for $k\in \{-n,\dots,-1 \}$ one has:
\[
\begin{array}{ccc}
s'_{k+1} = & s'_k ~~+ & \frac{2}{\kappa(s'_k)}~t'_0 + \overline{o}(t'_0)\\
t'_{k+1} = &          &                 ~~~~~~~~t'_0 + \overline{o}(t'_0).
\end{array}
\]
Since the billiard trajectories remain inside $U_-$, one has ${\frac{2}{\kappa(s_k)}}, {\frac{2}{\kappa(s'_k)}} \in [m_-,M_-]$, so
\begin{eqnarray*}\label{eq-crossing-negative-n-case1}
       - s'_{-n} =
s'_{0} - s'_{-n} &=& 
\sum_{k=-n}^{-1} s'_{k+1} - s'_k=
\left( \sum_{k=-n}^{-1} \frac{2}{\kappa(s'_k)} \right) t'_0 + \overline{o}(t'_0) 
  <  n M_- (1-\delta) t_0 + \overline{o}(t_0), \\
       - s_{-n+1} >
s_{0} - s_{-n+1} &=&
\sum_{k=-n+1}^{-1} s_{k+1} - s_k =
\left( \sum_{k=-n+1}^{-1} \frac{2}{\kappa(s_k)} \right) t_0 + \overline{o}(t_0) \ge
(n-1)m_- t_0 + \overline{o}(t_0).
\end{eqnarray*}
From \eqref{eq-cond-nmM-case1} it follows that, by shrinking the neighborhood $U$ further (before the choice of $\widetilde U$), we get $s_{-n+1} < s'_{-n}$, thus violating
$s'_k\in (s_k,s_{k+1})$ for $k=-n$. The second case is handled similarly. We provide the details for completeness.\\ \\
\noindent {\bf Case 2.} Assume $(1+\delta) t_1 < t'_0$.
In this case we will obtain a crossing for a positive index, that is $s'_n\notin (s_n,s_{n+1})$.
Note that, by \eqref{eq-no-crossing-cond}, one has $s_1 \le s'_1$.
Since the points $\left\{ ( s_k, t_k) \right\}_{k = 1}^{k=n+1},\,
\left\{ (s'_k,t'_k) \right\}_{k = 1}^{k=n}$ all remain inside
$U_+$, we have, as before, for $k\in \{1,\dots,n \}$
\[
\begin{array}{ccc}
s_{k+1} = & s_k ~~+ & \frac{2}{\kappa(s_k)}~t_1 + \overline{o}(t_1),\\
t_{k+1} = &         &                ~~~~~~~~t_1 + \overline{o}(t_1).
\end{array}
\]
Similarly, for $k\in \{1,\dots, n-1 \}$ we have
\[
\begin{array}{ccc}
s'_{k+1} = & s'_k ~~+ & \frac{2}{\kappa(s'_k)}~t'_0 + \overline{o}(t'_0),\\
t'_{k+1} = &          &                 ~~~~~~~~t'_0 + \overline{o}(t'_0),
\end{array}
\]
and consequently 
\begin{eqnarray*}\label{eq-crossing-negative-n-case2}
s'_{n} - s'_{1} &=& 
\sum_{k=1}^{n-1} s'_{k+1} - s'_{k} =
\left( \sum_{k=1}^{n-1} \frac{2}{\kappa(s'_k)} \right) t'_0 + \overline{o}(t'_0) >
 (n-1) m_+ (1+\delta)t_1 + \overline{o}(t_1), \\
s_{n+1} - s'_{1} \le
s_{n+1} - s_1 &=&
\sum_{k=1}^{n} s_{k+1} - s_k =
\left( \sum_{k=1}^{n} \frac{2}{\kappa(s_k)} \right) t_1 + \overline{o}(t_1) \le
n M_+ t_1 + \overline{o}(t_1).
\end{eqnarray*}
After shrinking $U$ as before, by 
\eqref{eq-cond-nmM-case2} we get $s_{n+1} < s'_{n}$,
thus violating $s'_k\in (s_k,s_{k+1})$ for $k=n$.


In both cases we obtained a crossing, contradicting \eqref{eq-no-crossing-cond}, which implies the invariant circle $\Gamma$ could not have intersected $V = I \times [0,\varepsilon]$, thus completing the proof of the proposition.
\qed

\begin{rem}\label{rem-spheric-fig-four} {\rm
In the case of $\mathbb S^2_+$, the difference arises in the analysis of Figure \ref{fig:osculating}, where one only has to consider osculating circles, and Equation \eqref{eq:alpha_taylor} is obtained using the spherical law of sines, similarly to the proof sketched in Figure \ref{fig:hyperbolic_exercise}.}
\end{rem}

\noindent Dan Itzhak Florentin \\
\noindent Department of Mathematics, Bar Ilan University, Israel \\
\noindent e-mail: dan.florentin@biu.ac.il
 \vskip 10pt
\noindent Yaron Ostrover \\
\noindent School of Mathematical Sciences, Tel Aviv University, Israel \\
\noindent e-mail: ostrover@tauex.tau.ac.il
\vskip 10pt
\noindent Daniel Rosen \\
\noindent Faculty of Mathematics, Ruhr-Universit\"at Bochum, Germany \\
\noindent e-mail: daniel.rosen@rub.de

\end{document}